\documentclass[11pt]{article}
\usepackage{graphicx,amsthm,lineno}

\oddsidemargin 0pt
\evensidemargin 0pt
\marginparwidth 40pt
\marginparsep 10pt
\topmargin 0pt
\headsep 10pt
\textheight 8.4in
\textwidth 6.5in

\title{Ramsey numbers for degree monotone paths}
\date{}
\begin{document}
\newtheorem{theorem}{Theorem}[section]
\newtheorem{definition}{Definition}[section]
\newtheorem{proposition}[theorem]{Proposition}
\newtheorem{corollary}[theorem]{Corollary}
\newtheorem{problem}[theorem]{Problem}
\newtheorem{conjecture}[theorem]{Conjecture}
\newtheorem{lemma}[theorem]{Lemma}
\newcommand*\cartprod{\mbox{ } \Box \mbox{ }}
\newtheoremstyle{break}% name
  {}%         Space above, empty = `usual value'
  {}%         Space below
  {\itshape}% Body font
  {}%         Indent amount (empty = no indent, \parindent = para indent)
  {\bfseries}% Thm head font
  {.}%        Punctuation after thm head
  {\newline}% Space after thm head: \newline = linebreak
  {}%         Thm head spec

\theoremstyle{break}

\newtheorem{propskip}[theorem]{Proposition}
\DeclareGraphicsExtensions{.pdf,.png,.jpg}
\author{Yair Caro \\ Department of Mathematics\\ University of Haifa-Oranim \\ Israel \and Raphael Yuster \\ Department of Mathematics\\ University of Haifa \\ Israel  \and Christina Zarb \\Department of Mathematics \\University of Malta \\Malta}

\maketitle

\begin{abstract}
A path $v_1,v_2,\ldots,v_m$ in a graph $G$ is \emph{degree-monotone} if  $deg(v_1) \leq deg(v_2) \leq \cdots \leq deg(v_m)$ where $deg(v_i)$ is the degree of $v_i$
in $G$. Longest degree-monotone paths have been studied in several recent papers. Here we consider the Ramsey type problem for degree monotone paths.
Denote by $M_k(m)$ the minimum number $M$ such that for all $n \geq M$, in any $k$-edge coloring of $K_n$ there is some $1\leq j \leq k$ such that the graph
formed by the edges colored $j$ has a degree-monotone path of order $m$. We prove several nontrivial upper and lower bounds for $M_k(m)$.
\end{abstract}

\section{Introduction}

A path $v_1,v_2,\ldots,v_m$ in a graph $G$ is \emph{degree-monotone} if  $deg(v_1) \leq deg(v_2) \leq \cdots \leq deg(v_m)$ where $deg(v_i)$ is the degree of $v_i$
in $G$. The maximum order over all degree-monotone paths in $G$ is denoted by $mp(G)$. The study of properties related to the longest degree-monotone path was explicitly
suggested by Deering et al. \cite{updownhilldom}. A more general monotone path problem was suggested long ago by Chvatal and Koml\'os \cite{chvatal1971some}, who
related oriented graphs and oriented paths to various monotonicity problems, motivated by the famous Erd{\H{o}}s-Szekeres Theorem  \cite{eliavs2013higher,erdos1935combinatorial} on monotone
sub-sequences, and by the Gallai-Roy Theorem (see \cite{dougwest}). Caro et al. \cite{dmpclz,dmp3} studied $mp(G)$ and other related parameters.

One important observation which is immediate from the Gallai-Roy theorem is that $mp(G) \geq \chi(G)$.
Indeed, if we orient an edge from a low degree vertex to a high degree vertex (breaking ties arbitrarily), then a directed path in the resulting oriented graph corresponds to a
degree-monotone path in the original undirected graph, and the Gallai-Roy Theorem asserts that in any orientation, the order of a longest directed path is at least as large as the chromatic number.

Our goal here is to study the Ramsey type problem for degree-monotone paths.
Denote by $M = M(m_1,m_2,\ldots,m_k)$ the minimum number $M$ such that for all $n \geq M$, in any $k$-edge coloring of $K_n$,  for some $j$ where $1\leq j \leq k$,  the spanning monochromatic graph
$G_ j$ formed by the edges colored $j$ satisfies $mp(G_j) \geq m_j$.
In the diagonal case $m=m_1= \cdots=m_k$, we write $M_k(m)$.  We refer to a monochromatic degree-monotone path in this context as an \emph{mdm-path} for short.
We will always assume that $k \ge 2$ and $m \ge 3$ to avoid the trivial cases.
 
One should observe a subtlety in the definitions of $M_k(m)$ (as well as $M(m_1,\ldots,m_k)$). It is not clear that if $n$ is the smallest integer for which $K_n$ satisfies the stated property,
then $M_k(m)=n$. This is because being true for $n$, does not a priori imply it for $n+1$ as the property of having a degree-monotone path is not hereditary.
Indeed, even the addition of a vertex $v$, and coloring all the edges joining it to the other vertices using $k$ colors might destroy some mdm-paths that existed prior to adding $v$.
Hence the requirement in the definition that $M$ is the smallest integer such that for {\em all} $n \ge M$ the stated property holds, is important.
These sort of Ramsey-degree problems (with the related subtle monotonicity problem just mentioned) originated in some papers by Albertson \cite{albertson1992turan,albertson1994people} and
Albertson and Berman \cite{albertson1991ramsey}, and were further developed shortly afterward by Chen and Schelp \cite{CPCchen} and Erd\H{o}s et al. \cite{Erdos1993183}. 
We mention the following interesting result that appeared in \cite{Erdos1993183}.
\begin{theorem}
In any $2$-coloring of the edges of $K_n$, where $n \geq R(m,m)$, there is a monochromatic copy of $K_m$  with vertices $v_1,\ldots,v_m$  such that  in the host monochromatic graph $G$,
$$
\max\{ deg(v_i)  : i=1, \ldots, m)\}  -  \min\{  deg(v_i)  :i = 1, \ldots, m\} \leq  R(m,m)-2\;,
$$
and this is sharp for $n \geq 4(r-1)(r-2)$ where $r = R(m,m)$.
\end{theorem}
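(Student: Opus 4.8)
The plan is to treat the two assertions separately: the upper bound $r-2$ (with $r=R(m,m)$) for every $n\ge r$, and the sharpness for $n\ge 4(r-1)(r-2)$. The first move I would make is a reduction that removes the color from the picture. Since $deg_R(v)+deg_B(v)=n-1$ for every vertex, any vertex set $S$ satisfies $\max_{v\in S}deg_R(v)-\min_{v\in S}deg_R(v)=\max_{v\in S}deg_B(v)-\min_{v\in S}deg_B(v)$; the two spreads are literally equal. Hence it suffices to produce \emph{any} monochromatic $K_m$, of either color, whose vertices have red degrees lying in an interval of $r-1$ consecutive integers.

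Next I would order the vertices so that $deg_R(v_1)\le deg_R(v_2)\le\cdots\le deg_R(v_n)$ and record the prototype of the bound in the case $n=r$: a graph on $r$ vertices cannot have both a vertex of degree $0$ and one of degree $r-1$, so the red-degree spread over \emph{all} $r$ vertices is at most $r-2$, and Ramsey's theorem supplies a monochromatic $K_m$ among them. For general $n$ I would run a sliding-window argument on this ordering. If some interval of $r-1$ consecutive red-degree values contains at least $r$ vertices, those $r$ vertices have spread at most $r-2$ and Ramsey finishes. Otherwise, for every $i$ the block $\{v_i,\dots,v_{i+r-1}\}$ cannot fit into $r-1$ consecutive values, which forces the window-spread inequality $deg_R(v_{i+r-1})\ge deg_R(v_i)+(r-1)$.

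The heart of the argument is to contradict this inequality, and this is the step I expect to be the main obstacle. Telescoping it along the ordering already gives $deg_R(v_n)-deg_R(v_1)\ge (r-1)\lfloor (n-1)/(r-1)\rfloor$, which clashes with the trivial bound $deg_R(v_n)-deg_R(v_1)\le n-2$ exactly when $(r-1)\mid(n-1)$; the other residues of $n$ leave a slack of at most $r-3$. I would close this slack by feeding in graphical realizability at both ends simultaneously, generalizing the $n=r$ prototype: the window-spread inequality forces vertices to accumulate near both the minimum and the maximum red degree, yet a vertex of red degree $deg_R(v_n)$ has only $n-1-deg_R(v_n)$ red non-neighbors and so cannot avoid the many low-red-degree vertices unless $deg_R(v_n)$ is small — an Erd\H{o}s--Gallai / Gale--Ryser type tension that cannot coexist with the spread inequality once $n\ge r$. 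Making this count tight for each residue class of $n \bmod (r-1)$ is the delicate bookkeeping.

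Finally, for sharpness I would exhibit, for $n\ge 4(r-1)(r-2)$, a $2$-coloring in which every monochromatic $K_m$ has spread at least $r-2$. I would start from a critical coloring $\phi$ of $K_{r-1}$ with no monochromatic $K_m$ (which exists since $r-1<R(m,m)$), blow up its vertices into parts $V_1,\dots,V_{r-1}$, color each crossing pair $V_iV_j$ by $\phi(ij)$, and color inside each part so as to create no new large monochromatic clique. Choosing the part sizes so that the red degrees fall into $r-1$ well-separated bands — the band of $V_i$ sitting near the red $\phi$-degree of $i$ times the common part size — forces the classes used by any monochromatic clique to span a monochromatic clique of $\phi$, hence a range of bands whose extreme red degrees differ by at least $r-2$. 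The obstacle here is purely quantitative: one must check that no monochromatic $K_m$ can economize by concentrating inside few bands, and the threshold $n\ge 4(r-1)(r-2)$ is precisely what makes the parts large enough for the bands to be simultaneously realizable and separated by the required gap.
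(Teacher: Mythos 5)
First, a caveat: this theorem is quoted in the introduction as a result from the literature (the paper of Erd\H{o}s et al.\ cited there) and the present paper contains no proof of it, so there is no in-paper argument to compare yours against; I can only assess your proposal on its own terms.

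Your opening reduction is correct and clean: since $deg_R(v)+deg_B(v)=n-1$ for every $v$, the red-degree spread and blue-degree spread of any vertex set coincide, so it suffices to find $r$ vertices whose red degrees lie in an interval of $r-1$ consecutive integers and invoke Ramsey inside them. The $n=r$ prototype and the window inequality $deg_R(v_{i+r-1})\ge deg_R(v_i)+(r-1)$ are also correct. But the proof has a genuine gap exactly where you flag it. Telescoping gives $deg_R(v_n)-deg_R(v_1)\ge (r-1)\lfloor (n-1)/(r-1)\rfloor$, which contradicts $deg_R(v_n)-deg_R(v_1)\le n-2$ only when $(r-1)\mid(n-1)$; for the other residues the slack is up to $r-3$, and the refined counting you gesture at does not close it as stated. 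Indeed, the window inequality only tells you that each interval of $r-1$ consecutive degree values contains at most $r-1$ vertices, and partitioning the degree range $[deg_R(v_1),\,deg_R(v_1)+n-2]$ into such intervals gives $n\le (r-1)\lceil (n-1)/(r-1)\rceil$, which is consistent for every residue with $1 \le s \le r-2$. So the contradiction must come from graph-theoretic realizability (the interaction between the few very-low-degree and very-high-degree vertices), and this is precisely the step you have not carried out; the one-line "cannot avoid the many low-degree vertices" argument works for $n=r+1$ but it is not at all clear how it scales, and a simple edge count between the top and bottom degree classes does not produce a contradiction either. As written, the first half of the theorem is not proved.

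The sharpness half is also only a sketch, and it has a concrete obstruction you have not addressed: a $2$-coloring of $K_{r-1}$ cannot have all red degrees distinct (a graph on $r-1$ vertices cannot contain both degree $0$ and degree $r-2$), so the $r-1$ "well-separated bands" indexed by the red $\phi$-degrees cannot all be distinct, and the claim that any monochromatic $K_m$ must span bands whose extremes differ by at least $r-2$ does not follow from the blow-up structure alone. You also leave open how to color inside the parts so that no monochromatic $K_m$ hides within one or two bands (a monochromatic clique may take several vertices from each part, so the touched parts need only form a monochromatic clique of $\phi$ of size less than $m$, not of size $m$), and the specific threshold $n\ge 4(r-1)(r-2)$ is nowhere used. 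Both halves therefore need substantial additional work before this constitutes a proof.
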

 
Having all these facts in mind we are now ready to state our first main result, which provides general upper and lower bounds for $M_k(m)$.
\begin{theorem}\label{t:1}
$$
\frac{ (m-1)^k}{2}  +\frac{m-1}{2 }+1  \le M_k(m) \le (m-1)^k +1\;.
$$ 
In fact, more generally, $M(m_1,\ldots,m_k)  \leq \prod_{i=1}^{k}(m_i -1) +1$.
\end{theorem}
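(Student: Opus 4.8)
The plan is to prove the general statement by contradiction, using a Gallai--Roy style vertex labelling with one coordinate per colour. Fix any $n \ge \prod_{i=1}^k (m_i-1)+1$ together with a $k$-edge-colouring of $K_n$, whose colour classes are the spanning graphs $G_1,\ldots,G_k$, and suppose for contradiction that $mp(G_j) \le m_j-1$ for every $j$. The goal is to manufacture a pigeonhole collision that is incompatible with this assumption.

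First I would encode the degree-monotonicity of each colour as an \emph{acyclic} orientation. For each $j$, orient an edge $uv$ of $G_j$ from $u$ to $v$ whenever $\deg_{G_j}(u) < \deg_{G_j}(v)$, and break ties (equal degrees) using a fixed linear order on the vertices, say by index. Call the resulting digraph $D_j$. Along every arc the pair $(\deg_{G_j}(\cdot),\text{index})$ strictly increases lexicographically, so $D_j$ has no directed cycle; moreover, by construction each directed path of $D_j$ is a degree-monotone path of $G_j$, and hence, by the standing assumption, has at most $m_j-1$ vertices. Next, for each vertex $v$ set $f_j(v)$ to be the order of a longest directed path of $D_j$ ending at $v$. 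Acyclicity makes this well defined, forces $f_j(v)\in\{1,\ldots,m_j-1\}$, and yields the key monotonicity: whenever $u\to v$ is an arc of $D_j$ we have $f_j(v)\ge f_j(u)+1$, so in particular $f_j(u)\ne f_j(v)$.

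The payoff is the combined label $F(v)=(f_1(v),\ldots,f_k(v))$, which lives in the product set $\prod_{j=1}^k\{1,\ldots,m_j-1\}$ of cardinality $\prod_{j=1}^k(m_j-1)$. Since $n\ge \prod_{j=1}^k(m_j-1)+1$ strictly exceeds the number of available labels, two distinct vertices $u,v$ satisfy $F(u)=F(v)$. But $u$ and $v$ are adjacent in $K_n$ in some colour $j$, so $uv$ is an arc of $D_j$ in one of the two directions, forcing $f_j(u)\ne f_j(v)$ and contradicting $F(u)=F(v)$. Therefore some $G_j$ satisfies $mp(G_j)\ge m_j$, which is exactly what is required. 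The diagonal upper bound $M_k(m)\le (m-1)^k+1$ then follows by setting $m_1=\cdots=m_k=m$.

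I expect the main obstacle to be entirely in the setup rather than in the counting: one must choose the tie-breaking rule carefully so that the orientation $D_j$ is simultaneously acyclic (otherwise the longest-path labels $f_j$ need not be finite or well defined) and still has the property that \emph{every} directed path is degree-monotone. Once that delicate point is settled, the strict increase of $f_j$ along arcs and the pigeonhole step are immediate. I would also note that the argument runs verbatim for \emph{every} $n\ge\prod_{j=1}^k(m_j-1)+1$, so it automatically handles the non-hereditary ``for all $n\ge M$'' subtlety flagged in the introduction, with no separate monotonicity argument needed.
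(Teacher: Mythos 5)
Your argument for the upper bound is correct, and it is essentially the paper's argument with the citation unwound: the paper orients each colour class from lower to higher degree (ties broken arbitrarily) and then invokes a lemma of Gy\'arf\'as--Lehel/Bermond/Chvatal, which is itself the Zykov--Gallai--Roy product bound that your labelling $F(v)=(f_1(v),\ldots,f_k(v))$ proves directly. Your observation that the argument runs verbatim for every $n\ge\prod_{j}(m_j-1)+1$, and so disposes of the non-hereditary subtlety, is also correct and is implicitly how the paper handles it.

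The genuine gap is that the theorem has two halves and you have proved only one. The lower bound $\frac{(m-1)^k}{2}+\frac{m-1}{2}+1\le M_k(m)$ requires an explicit construction: a $k$-edge-colouring of a complete graph on $\frac{(m-1)^k}{2}+\frac{m-1}{2}$ vertices with no monochromatic degree-monotone path of order $m$, and your proposal contains no construction at all. The paper does this by induction on $k$. For $k=2$ it takes blocks $X_1,\ldots,X_{m-1}$ with $|X_j|=j$, colours edges inside a block with colour $1$ (components are cliques of order at most $m-1$, so no path of order $m$) and edges between blocks with colour $2$ (any $m$-vertex path in colour $2$ must contain two non-consecutive vertices of the same block, but these have equal colour-$2$ degree distinct from that of every vertex outside the block, so the path cannot be degree-monotone). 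For the inductive step it builds, for each $t$, a complete graph $Y_t$ as the disjoint union of $m-1$ of the $m$ previously constructed colourings of consecutive orders, keeps their $k$-colourings inside the blocks, and gives all cross edges the new colour $k+1$; the same equal-degree argument kills colour $k+1$, and the block sizes are chosen so that $|V(Y_{m-1-t})|=\frac{(m-1)^{k+1}}{2}+\frac{m-1}{2}-t$, which is why the construction must be carried out for $m$ consecutive orders simultaneously rather than just the extremal one. Without some construction of this kind the stated theorem is not established.
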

\noindent
Notice that the upper and lower bounds for $M_k(m)$ differ by a factor smaller than $2$.

As usual in most Ramsey type problems, proving tighter bounds, or even computing exact small values, turns out to be a difficult task already in the first, and perhaps most interesting, case of
paths of order $3$, namely $M_k(3)$. This case can also be interpreted as requiring that the degree of every vertex of a graph with no isolated edges is a local extremum (either strictly smaller than
the degree of all its neighbors or strictly larger than the degree of all its neighbors).
Observe that Theorem \ref{t:1} gives $2^{k-1}+2 \leq M_k(3) \leq 2^k+1$. Our next theorem improves both upper and lower bounds.
\begin{theorem}\label{t:2}
$M_2(3) = 4$, $M_3(3) = 8$  and $\frac{3}{4}2^k +2 \le M_k(3)\leq 2^k -1$ for $k \geq 4$.
\end{theorem}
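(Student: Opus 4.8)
Throughout I call a $k$-edge-colouring of $K_n$ \emph{good} if every colour class $G_j$ satisfies $mp(G_j)\le 2$, and I write $f(k)$ for the largest $n$ admitting a good $k$-colouring. The whole theorem amounts to $f(2)=3$, $f(3)=7$ and $3\cdot 2^{k-2}+1\le f(k)\le 2^{k}-2$ for $k\ge 4$, combined with the fact (supplied by Theorem~\ref{t:1}) that no $n>2^{k}$ is good-colourable, so that $M_k(3)=f(k)+1$ once the intermediate orders are settled. My first step is the structural description of a good colour class: a $3$-vertex path through $b$ is degree-monotone exactly when $\deg(b)$ lies between the degrees of its two neighbours, so $mp(G_j)\le 2$ holds iff in each non-trivial component of $G_j$ every vertex is a strict local degree-maximum or a strict local degree-minimum; equivalently, after deleting isolated vertices and isolated edges, $G_j$ is bipartite with a ``high'' class $H_j$ and a ``low'' class $L_j$ and every edge $h\ell$ satisfies $\deg_{G_j}(h)>\deg_{G_j}(\ell)$. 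I use this in both directions.

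For the lower bounds I would build good colourings recursively from a gluing gadget: given good $k$-colourings of $K_a$ and $K_b$ with $a\ne b$, put them on disjoint sets $A,B$ and colour all $A$--$B$ edges with a fresh colour $k+1$. Colours $1,\dots,k$ become disjoint unions of two good graphs, and since all degrees stay inside $A$ or inside $B$ no monotone $3$-path can cross, so they stay good; the new colour is $K_{a,b}$, good precisely because $a\ne b$. Hence if every size in an interval $[L,U]$ is good-$k$-colourable then, choosing $a\ne b$ in $[L,U]$, every size in $[2L+1,\,2U-1]$ is good-$(k+1)$-colourable. I seed this at $k=3$ with the interval $[5,7]$: the sizes $3,4,5$ already come from the gadget applied to the $k=2$ colourings of $K_1,K_2,K_3$, and I would exhibit explicit good $3$-colourings of $K_6$ and $K_7$ by hand. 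Iterating $[L_k,U_k]\mapsto[2L_k+1,2U_k-1]$ from $[5,7]$ keeps three consecutive sizes at every level and yields $U_k=3\cdot 2^{k-2}+1$, so $M_k(3)\ge U_k+1=\frac34 2^{k}+2$; the single colourings of $K_3$ and $K_7$ already give $M_2(3)\ge4$ and $M_3(3)\ge8$.

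For the upper bounds, Theorem~\ref{t:1} (via the labels $\ell_j(v)\in\{1,2\}$ recording the longest monotone colour-$j$ path ending at $v$) already excludes every $n\ge 2^{k}+1$, so for each claim I need only eliminate the finitely many boundary orders. For $M_2(3)\le4$ I must rule out $n=4$, an immediate finite check: the only graph on $4$ vertices with $\ge3$ edges and $mp\le2$ is the star $K_{1,3}$, whose complement in $K_4$ is a triangle, so the two classes cannot both be good. For $M_3(3)\le8$ I must rule out $n=8$; since $n=2^{3}$ forces the label map to be a bijection onto $\{1,2\}^{3}$, each colour then induces a strictly dominated bipartite graph on a fixed $4+4$ split of the eight labels, and I would derive a contradiction from the degree constraints these three simultaneous dominations impose (every vertex has total degree $7$, the all-high label must strictly exceed all its neighbours in every colour, the all-low label must lie strictly below them, and so on). For $k\ge4$ I must exclude both $n=2^{k}-1$ and $n=2^{k}$, that is, show the strict-dominance constraints forbid realising $2^{k}-1$ of the label vectors; this is the same analysis pushed one step further, proving that at least two label vectors (for instance the all-high and the all-low vector) cannot occur together with a full realisation of the rest.

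The main obstacle is twofold. On the constructive side the delicate point is the base colourings of $K_6$ and $K_7$: the obvious idea of peeling off a complete bipartite colour class fails, because the complement of a bipartite graph on $6$ or $7$ vertices always contains a clique of order $\ge4$ that the remaining two colours cannot absorb, so the working colourings must use genuinely \emph{incomplete} bipartite classes whose missing cross-edges feed the other colours enough external edges to break the degree constraints. On the enumerative side the real work is the exclusion of the boundary orders $n=8$ and $n\in\{2^{k}-1,2^{k}\}$: one must convert the local ``strict local extremum'' condition into a global degree-counting contradiction, and the threshold $k\ge4$ enters exactly because for $k\le3$ the constructions show that all but one label vector \emph{can} be realised. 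A recurring subtlety is that good-colourability is not hereditary in $n$, so every order I invoke must be constructed or excluded on its own rather than inherited from a neighbouring order.
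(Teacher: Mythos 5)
Your lower-bound half is essentially the paper's own argument: the gluing gadget (two good colourings of different orders joined by a fresh colour forming $K_{a,b}$ with $a\neq b$) is exactly the paper's recursive construction specialised to $m=3$, the interval recursion $[L,U]\mapsto[2L+1,2U-1]$ seeded at $[5,7]$ reproduces its Remark~1, and the missing base colourings of $K_6$ and $K_7$ do exist (the paper exhibits them; e.g.\ $K_7$ decomposes into three copies of $K_{2,3}\cup K_2$ --- note that these \emph{are} disjoint unions of complete bipartite graphs, contrary to your claim that complete bipartite classes cannot work). So that half is a fillable gap. The upper-bound half, however, has a genuine missing idea. Your structural reduction (each colour class is, after discarding $K_2$-components, bipartite with every edge going from a strictly higher-degree vertex to a strictly lower-degree one) is correct, but everything then hinges on a nontrivial lemma you never state or prove: a bipartite graph in which every vertex of the side $A$ is non-isolated and has degree at least that of all its neighbours, with $|A|\ge|B|$ and at least one strict inequality when $|A|=|B|$, cannot exist (the paper's ``illusive graph'' lemma, proved by a minimal-counterexample argument via Hall's theorem and edge counting, plus a companion lemma forcing the near-balanced case to be $K_{b,b-1}$). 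Without this you cannot conclude that balanced components are single edges, which is precisely what turns the $n=2^k$ case into the edge-count contradiction $k2^{k-1}=\binom{2^k}{2}$. Your substitute --- degree constraints on ``the all-high label'' --- rests on a misconception: which side of a component dominates is decided component by component, so the global bipartition classes $L_j,R_j$ of $G_j$ do not split into a high class and a low class, and the vertex with label vector $(1,\dots,1)$ need not dominate its neighbours in any colour.

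The case $n=2^k-1$ for $k\ge4$ is not ``the same analysis pushed one step further.'' It is the hardest part of the theorem. Showing that one of the $2^k$ label vectors is unrealised follows from injectivity of the label map, but showing that \emph{two} must be missing requires, in the paper: (i) the classification of each colour class as $K_{b,b-1}\cup(2^{k-1}-b)K_2$ via the two illusive-graph lemmas; (ii) an averaging argument locating a colour whose unbalanced component $K_{a,a-1}$ satisfies $a-1\ge\lceil\sqrt{(2^{k-1}-1)(2^k-k-1)/k}\rceil$; (iii) the observation that the unbalanced components of the other colours cannot meet both sides of this $K_{a,a-1}$, so one side $B$ meets at most $\lfloor(k-1)/2\rfloor$ of them; (iv) a pigeonhole on sign vectors of length $t$ inside $B$ producing a large set $B'$ whose internal edges must all come from at most $k-1-t$ matchings; and (v) explicit numerical verification of the resulting inequality for $k=4,\dots,9$ (the asymptotic bound only kicks in at $k\ge10$). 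None of these steps is present in, or derivable from, your proposal, so as written the bound $M_k(3)\le 2^k-1$ is unsupported.
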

\noindent
We note that while the upper bound is only a mild improvement over the one provided by Theorem \ref{t:1}, its proof turns out to be somewhat involved.

The first off-diagonal nontrivial case is $M(3,m)$ for which we prove:
\begin{theorem}\label{t:3}
$M(3,m)=2(m-1)$.
\end{theorem}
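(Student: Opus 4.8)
The plan is to prove the two matching inequalities $M(3,m)\ge 2(m-1)$ and $M(3,m)\le 2(m-1)$ separately, where throughout $G_1,G_2$ denote the two color classes.

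For the lower bound I would exhibit a single $2$-coloring of $K_n$ with $n=2m-3$ for which neither color reaches its target; because of the non-hereditary subtlety stressed in the introduction, producing one such $n\ge 2m-3$ already forces $M(3,m)>2m-3$. Partition the $2m-3$ vertices into sets $X,Y$ with $|X|=m-1$ and $|Y|=m-2$, color every $X$–$Y$ edge with color $1$ and every edge inside $X$ or inside $Y$ with color $2$. Then $G_1=K_{m-1,m-2}$, whose two sides carry the distinct degrees $m-2$ and $m-1$; every $3$-vertex path alternates sides, so its middle vertex is a strict local extremum and $mp(G_1)=2$. Meanwhile $G_2$ is the disjoint union of two cliques $K_{m-1}$ and $K_{m-2}$, so $mp(G_2)=\max(m-1,m-2)=m-1$. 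Neither target ($3$ in color $1$, $m$ in color $2$) is met, giving $M(3,m)\ge 2(m-1)$.

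For the upper bound, suppose for contradiction that some $n\ge 2(m-1)$ admits a coloring with $mp(G_1)\le 2$ and $mp(G_2)\le m-1$. First I would extract the structure forced by $mp(G_1)\le 2$: every vertex of $G_1$ is a strict local extremum, the only exception being two adjacent vertices of equal degree, which must then form an isolated edge. Consequently $G_1$ is bipartite, with parts $A$ (local maxima) and $B$ (local minima), and every edge strictly decreases degree from its $A$-endpoint to its $B$-endpoint (apart from isolated edges). Since $A$ and $B$ are independent in $G_1$, they induce cliques in $G_2$; and a clique of order $t$ yields a degree-monotone path of order $t$ in any host graph, so $mp(G_2)\ge\max(|A|,|B|)$. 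Hence $\max(|A|,|B|)\le m-1$, which with $|A|+|B|=n\ge 2(m-1)$ forces $n=2(m-1)$ and $|A|=|B|=m-1$. This disposes of every $n>2(m-1)$ and reduces the problem to the balanced boundary case.

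The heart of the matter is then to find a degree-monotone path of order $m$ in $G_2$ when $|A|=|B|=m-1$. Here $\deg_{G_2}(x)=(2m-3)-\deg_{G_1}(x)$, so the $G_2$-degree order is the exact reverse of the $G_1$-degree order, and by the degree-decrease property a cross pair $ab$ with $a\in A$, $b\in B$ and $\deg_{G_2}(a)\ge\deg_{G_2}(b)$ must be a color-$2$ edge. Thus for any threshold $\phi$ I can build a color-$2$ path that runs up the clique $B$ through all $b$ with $\deg_{G_1}(b)\ge\phi$, crosses once into $A$, and continues up the clique $A$ through all $a$ with $\deg_{G_1}(a)\le\phi$; this path is degree-monotone of order $g(\phi):=|\{b:\deg_{G_1}(b)\ge\phi\}|+|\{a:\deg_{G_1}(a)\le\phi\}|$. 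It remains to show $\max_\phi g(\phi)\ge m$. Writing $e$ for the number of $G_1$-edges and using $\sum_a\deg_{G_1}(a)=\sum_b\deg_{G_1}(b)=e$ with $|A|=|B|=m-1$, a short counting argument gives this: if $g(\phi)\le m-1$ for all $\phi$, the case $\phi=0$ forces every $\deg_{G_1}(a)\ge1$, while summing the resulting inequalities $|\{b:\deg_{G_1}(b)\ge\phi\}|\le|\{a:\deg_{G_1}(a)\ge\phi+1\}|$ over $\phi\ge1$ yields $e\le e-(m-1)$, which is absurd for $m\ge3$. I expect the main obstacle to be exactly this boundary case, and within it the handling of ties: the one place the crossing can fail is an edge between an $A$-vertex and a $B$-vertex of equal degree, which is precisely an isolated edge of $G_1$ and hence genuinely color $1$. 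I would circumvent this either by crossing at a strict degree drop whenever the threshold offers more than one candidate endpoint, or by treating the degree-$1$ vertices arising from isolated edges directly (for instance, when $G_1$ is a single isolated edge, $G_2=K_{2m-2}$ minus one edge and trivially has a monotone path of order $2m-3\ge m$). Confirming that these ties can always be routed around, so that the clean bound $\max_\phi g(\phi)\ge m$ yields an honest color-$2$ degree-monotone path, is the delicate step.
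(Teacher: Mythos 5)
Your lower bound construction is the paper's own, and your structural reduction of the upper bound (every non-isolated-edge vertex of $G_1$ is a strict local extremum, so $G_1$ is bipartite with parts $A,B$ that are cliques in $G_2$, forcing $n=2(m-1)$ and $|A|=|B|=m-1$) is sound and in fact a genuinely different route: the paper instead invokes Lemma \ref{l:1} to get two cliques in $G_1$, extracts a perfect matching in $G_2$ between them, and then reduces to its Lemma \ref{l:illusive} on ``illusive'' bipartite graphs. Your threshold path inside $G_2$ and the double count showing $\max_\phi g(\phi)\ge m$ are both correct, and if the crossing step always worked your argument would bypass the illusive-graph machinery entirely.

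But the tie case you flag is a genuine gap, not a routine verification. Working out which configurations survive your routing heuristics: the crossing can only fail when $\phi=1$ and the designated endpoints $b^*\in B$, $a^*\in A$ both have $G_1$-degree $1$ and form an isolated $G_1$-edge; your swaps dispose of this unless $a^*$ is the \emph{unique} vertex of $A$ with $\deg_{G_1}\le 1$ and $b^*$ is the \emph{unique} vertex of $B$ of minimum $G_1$-degree (threshold $\phi=0$ handles the case of several low-degree $A$-vertices, since a $G_1$-isolated $a$ admits any crossing). In that surviving configuration $g(1)=m$ exactly, every other vertex of $A$ and of $B$ has $G_1$-degree at least $2$, and no rerouting or other threshold produces an order-$m$ path; one must instead prove that the configuration itself cannot exist. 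Deleting $a^*,b^*$ leaves a bipartite graph with equal parts in which every $A$-vertex is non-isolated and has degree strictly exceeding that of each of its neighbours --- exactly an illusive graph in the sense of Lemma \ref{l:illusive}, whose nonexistence is the substantive lemma the paper spends Section 3.1 proving (via Hall's theorem and a minimal-counterexample induction). So your proposal, as written, silently relies on that lemma (or on a replacement such as the double count $|A'|=\sum_{ab\in E}1/\deg(a)<\sum_{ab\in E}1/\deg(b)\le|B'|$, which does close this particular case quickly). Until that step is supplied, the proof is incomplete precisely at the point you yourself identified as unconfirmed.
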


In the rest of this paper we prove the general bounds in Section 2, the more involved tighter bounds for paths of order $3$ are proved in Section 3, and the proof of Theorem \ref{t:3} appears in
Section 4. The final section contains some specific open problems. Our notation follows that of \cite{dougwest}, and will otherwise be introduced when it first appears.

\section{General upper and lower bounds}

In this section we prove Theorem \ref{t:1}.
The upper bound in Theorem \ref{t:1} is a consequence of the following result proved independently by 
Gy\'arf\'as and Lehel \cite{GL-1973}, Bermond \cite{bermond-1974}, and Chvatal \cite{chvatal-1972}. They used an observation of Zykov \cite{zykov1949some}
that states that in any edge coloring of a complete graph with more than $\prod_{i=1}^k (m_i-1)$ vertices with $k$ colors, there is a color $i$ that induces a graph
whose chromatic number is at least $m_i$, together with the Gallai-Roy Theorem to deduce:
\begin{lemma}\label{l:1}
In any $k$-coloring of the edges of a tournament on more than $\prod_{i=1}^k (m_i-1)$ vertices, there is a directed path of order $m_i$, all of whose edges are colored $i$.
The bound $\prod_{i=1}^k (m_i-1)$ is tight. Furthermore, in any extremal example, the chromatic number of the graph whose edges are colored with color $i$
is $m_i-1$ and any proper $(m_i-1)$-vertex coloring of it is equitable (all vertex classes have equal size).
\end{lemma}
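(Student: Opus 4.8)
The plan is to derive all three assertions from the interplay between the Gallai--Roy Theorem and Zykov's product-coloring observation.

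For the existence statement I would argue by contraposition. Let $T$ be a tournament on $n$ vertices whose edges are $k$-colored, and for each color $i$ let $G_i$ denote the spanning oriented subgraph consisting of the edges colored $i$ (with the orientation inherited from $T$). Assume that for every $i$ there is no directed path of order $m_i$ all of whose edges are colored $i$; that is, the longest directed path in $G_i$ has order at most $m_i-1$. By the Gallai--Roy Theorem applied to the orientation $G_i$, the underlying undirected graph of $G_i$ satisfies $\chi(G_i)\le m_i-1$. Now fix, for each $i$, a proper coloring $c_i$ of $G_i$ using the color set $\{0,1,\dots,m_i-2\}$, and assign to each vertex $v$ the tuple $\Phi(v)=(c_1(v),\dots,c_k(v))$. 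Any two vertices $u,v$ are joined by an edge of $T$, say colored $j$; since that edge lies in $G_j$ we have $c_j(u)\ne c_j(v)$, so $\Phi(u)\ne\Phi(v)$. Hence $\Phi$ is an injection into $\prod_{i=1}^k\{0,\dots,m_i-2\}$, giving $n\le\prod_{i=1}^k(m_i-1)$. This is exactly the contrapositive of the existence claim.

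For tightness I would exhibit an extremal tournament on the vertex set $V=\prod_{i=1}^k\{0,\dots,m_i-2\}$, of size $\prod_{i=1}^k(m_i-1)$. Given distinct $u=(u_1,\dots,u_k)$ and $v=(v_1,\dots,v_k)$, let $i$ be the smallest coordinate in which they differ; color the edge $uv$ with color $i$ and orient it from the endpoint with the smaller $i$-th coordinate to the one with the larger. Along any directed path using only color-$i$ edges, consecutive vertices agree on coordinates $1,\dots,i-1$ and the $i$-th coordinate strictly increases; since the $i$-th coordinate ranges over $m_i-1$ values, such a path has order at most $m_i-1$. Thus no monochromatic directed path of order $m_i$ exists, proving the bound is best possible.

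The structural ``furthermore'' is the crux, and the key idea is that in an extremal example the injection $\Phi$ above becomes a bijection. Suppose $T$ is any extremal $k$-colored tournament, so $n=\prod_{i=1}^k(m_i-1)$ and for each $i$ there is no color-$i$ directed path of order $m_i$; as before $\chi(G_i)\le m_i-1$ for all $i$. Choosing proper colorings $c_i$ with $\chi(G_i)$ colors, the same argument shows $\Phi$ is injective, so $n\le\prod_{i}\chi(G_i)\le\prod_{i}(m_i-1)=n$. Equality forces $\chi(G_i)=m_i-1$ for every $i$. Moreover, for any choice of proper $(m_i-1)$-colorings $c_i$, the map $\Phi$ is an injection from an $n$-element set into a set of size $\prod_{i}(m_i-1)=n$, hence a bijection onto $\prod_{i}\{0,\dots,m_i-2\}$. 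Consequently, for each fixed $i$ and each value $a$, the number of vertices with $c_i(v)=a$ equals the number of tuples with $i$-th coordinate $a$, namely $\prod_{j\ne i}(m_j-1)$, which is independent of $a$; therefore $c_i$ is equitable. The main obstacle is recognizing that the counting for equitability requires no understanding of the individual graphs $G_i$ at all---it follows purely from the bijectivity of the product map $\Phi$, which is itself forced by the extremality of $n$.
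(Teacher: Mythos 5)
Your proof is correct and follows exactly the route the paper indicates: the paper does not prove Lemma~\ref{l:1} itself but cites it (Gy\'arf\'as--Lehel, Bermond, Chv\'atal) and sketches the derivation as Zykov's product-coloring observation combined with the Gallai--Roy Theorem, which is precisely what you carry out in full, including the lexicographic tightness construction and the bijectivity argument for the equitability claim.
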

The upper bound in Theorem \ref{t:1} is a consequence of Lemma \ref{l:1} obtained as follows. Consider a coloring of $K_n$ with $k$ colors, and in each colored graph $G_i$,
orient an edge $uv$ colored with $i$ from $u$ to $v$ if $deg_i(u) > deg_i(v)$ where $deg_i(x)$ is the degree of $x$ in $G_i$ (break ties arbitrarily).
We then obtain a coloring of a tournament with $k$ colors. Now, if $n > \prod_{i=1}^k (m_i-1)$, Lemma \ref{l:1} asserts that there is a monochromatic directed path of order $m_i$
all of whose edge are colored $i$. This path is, by construction, an mdm-path in $G_i$. This proves that $M(m_1,\ldots,m_k) \le 1+\prod_{i=1}^k (m_i-1)$
and in particular, $M_k(m) \le 1+(m-1)^k$. Observe that a construction showing tightness in Lemma \ref{l:1} is not necessarily relevant in our setting as it may not imply tightness for the
degree-monotone problem. Nevertheless, as the lemma states, it does imply that {\em if} the bound $1+\prod_{i=1}^k (m_i-1)$ is tight, then any extremal example on
$\prod_{i=1}^k (m_i-1)$ vertices must have that $mp(G_i)=m_i-1$ and that any $m_i-1$ coloring of $G_i$ is equitable.

\vspace{3mm}
The next lemma proves the lower bound in Theorem \ref{t:1}.
\begin{lemma}\label{l:lower}
$M_k(m) \geq \frac{(m-1)^k}{2}+ \frac{m-1}{2} +1$.
\end{lemma}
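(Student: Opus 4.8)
The plan is to prove the bound by an explicit extremal construction rather than by analysing all colorings. Write $a=m-1$ and $n_k=\frac{a^k+a}{2}$. It suffices to produce a single $k$-edge-coloring of $K_{n_k}$ in which every color class $G_j$ satisfies $mp(G_j)\le m-1$: by the non-hereditary subtlety already noted in the introduction, such a coloring witnesses that $n_k$ itself violates the defining property, so no valid $M$ can be $\le n_k$, giving $M_k(m)\ge n_k+1=\frac{(m-1)^k}{2}+\frac{m-1}{2}+1$. I would in fact prove the slightly stronger statement by induction on $k$: for every $n$ with $1\le n\le n_k$ the complete graph $K_n$ admits a $k$-coloring all of whose color classes have $mp\le m-1$. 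The strengthening to all such $n$ is what makes the induction self-contained, since the inductive step will feed on colorings of strictly smaller complete graphs. The base case $k=1$ is immediate: $K_n$ is $n$-regular, so every path is degree-monotone and $mp(K_n)=n\le a$ for all $n\le n_1=a$.

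For the inductive step, given $n\le n_k$ I would partition the vertex set into $p\le a$ classes $V_1,\dots,V_p$ of pairwise \emph{distinct} sizes $c_1,\dots,c_p$, each at most $n_{k-1}$. Edges inside each $V_i$ are colored, via the inductive hypothesis applied to $K_{c_i}$, using colors $1,\dots,k-1$ so that each class internally avoids a degree-monotone path of order $m$; all edges between distinct classes receive the new color $k$. Two things then need checking. First, for a color $i\le k-1$ every $i$-colored edge lies inside one class, so a degree-monotone path in $G_i$ is confined to a single class and the $i$-degree of a vertex equals its $i$-degree computed inside its class; hence $mp(G_i)\le m-1$ by induction. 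Second, $G_k$ is the complete multipartite graph with parts $V_1,\dots,V_p$, where a vertex of $V_i$ has degree $n-c_i$; since the $c_i$ are distinct, these $p$ degree values are distinct. In any degree-monotone path of $G_k$ the degrees must therefore be strictly increasing, because two vertices of equal $G_k$-degree would belong to the same part and, being equal, would appear consecutively in the sorted degree sequence of the path — impossible, as each part is independent in $G_k$ and consecutive path vertices are adjacent. Thus such a path meets each part at most once and has order at most $p\le m-1$, so $mp(G_k)\le m-1$.

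The arithmetic closes exactly: choosing the $a$ largest admissible distinct sizes $n_{k-1},n_{k-1}-1,\dots,n_{k-1}-(a-1)$ gives total $a\,n_{k-1}-\frac{a(a-1)}{2}=\frac{a^k+a}{2}=n_k$, so the construction reaches precisely $n_k$ vertices. The step I expect to be the real obstacle is the combinatorial realizability underpinning the strengthened induction, namely verifying that \emph{every} integer $n$ with $1\le n\le n_k$ can be written as a sum of at most $a=m-1$ pairwise distinct integers from $\{1,\dots,n_{k-1}\}$, so that a valid partition into classes always exists. The clean way to see this is that, for each fixed $j\le a$, the attainable sums of exactly $j$ distinct elements of $\{1,\dots,n_{k-1}\}$ form a contiguous integer interval from $\frac{j(j+1)}{2}$ up to $j\,n_{k-1}-\frac{j(j-1)}{2}$, and since $n_{k-1}\ge a$ these consecutive intervals overlap and their union is exactly $[1,n_k]$. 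The conceptual crux is really the observation that distinct part sizes force the new color to have only strictly-increasing monotone paths; the remaining work is this bookkeeping and the routine check that the degrees in colors $1,\dots,k-1$ are unaffected by the addition of color $k$.
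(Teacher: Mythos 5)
Your proposal is correct and follows essentially the same route as the paper: a recursive construction in which the new color class is a complete multipartite graph with pairwise distinct part sizes, so that any degree-monotone path in it must have strictly increasing degrees and hence meets each part at most once. The only difference is that you carry the induction over \emph{all} $n\le \frac{(m-1)^k+(m-1)}{2}$ (hence your extra sum-of-distinct-integers bookkeeping), whereas the paper tracks only the top $m$ consecutive values, which is exactly what its inductive step consumes.
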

\begin{proof}
We will prove the stronger claim that for each integer $n$ of the form $\frac{(m-1)^k}{2}+ \frac{m-1}{2}-t$ for $t=0,\ldots,m-1$,
there is an edge coloring of $K_n$ with $k$ colors and with no mdm-path of order $m$.

We proceed by induction on $k$, starting with $k=2$.
Let $X_1,\ldots,X_{m-1}$ be sets such that $|X_j|=j$ for $j=1,\ldots,m-1$.
Form a complete graph on $V=\cup_{j=1}^{m-1}X_j$ by coloring an edge with both endpoints in the same set with color $1$ and an edge with endpoints in distinct sets with color $2$.
As the color $1$ induces a graph $G_1$ whose components are cliques of order at most $m-1$, there is no path on $m$ vertices in $G_1$.
For the color $2$, observe that any path on $m$ vertices in the graph $G_2$ must contain two non-consecutive vertices from the same set $X_j$ for some $j$.
But any two vertices in $X_j$ have the same degree in $G_2$ and this degree is distinct from the degree in $G_2$ of any vertex not in $X_j$.
Hence there is no mdm-path of order $m$ in $G_2$. As the number of vertices is $|V|=\sum_{j=1}^{m-1} j = m(m-1)/2$, the claim holds for $k=2$ with $t=0$.
However, notice that the same argument holds if we take a smaller union $V \setminus X_t$ for $t=1,\ldots,m-1$ (just take the same coloring and omit $X_t$).
Hence, the claim holds for $k=2$ and $t=0,\ldots,m-1$.

Now assume we have proved that there are complete graphs on $\frac{(m-1)^k}{2}+ \frac{m-1}{2}-t$ vertices for $t=0,\ldots,m-1$, and a $k$-edge coloring of each of them
with no mdm-path of order $m$. We prove for $k+1$.
Denote such colored complete graphs by $X_0, \ldots, X_{m-1}$ where $X_t$ has $\frac{(m-1)^k}{2}+ \frac{m-1}{2}-t$ vertices.
Let $Y_t$ be the complete graph obtained by taking the disjoint union of $X_0,X_1,\ldots,X_{t-1},X_{t+1},\ldots,X_{m-1}$ (using the existing $k$-coloring in each component)
and color any two vertices with endpoints in distinct $X_j$ with color $k+1$.
By induction, there is no mdm-path of order $m$ on colors $1,\ldots,k$ and there is also no mdm-path of order $m$ on color $k+1$ since 
any path on $m$ vertices in the graph $G_{k+1}$ (the subgraph of $Y_t$ on the edges colored $k+1$) must contain two non-consecutive vertices from the same subgraph $X_j$ for some $j$.
But any two vertices in the same $X_j$ have the same degree in $G_{k+1}$ and this degree is distinct from the degree in $G_{k+1}$ of any vertex not in $X_j$.
Hence $Y_t$ has no mdm-path of order $m$.
Now notice that
\begin{eqnarray*}
|V(Y_{m-1-t})| & = & \left(\sum_{s=0}^{m-1} (\frac{(m-1)^k}{2}+ \frac{m-1}{2}-s)\right)  - \left(\frac{(m-1)^k}{2}+ \frac{m-1}{2}-(m-1-t)\right)\\
& = &\frac{(m-1)^{k+1}}{2}+ \frac{m-1}{2}-t
\end{eqnarray*}
proving the induction step for $k+1$ and $t=0,\ldots,m-1$.
\end{proof}

\noindent
\textbf{Remark 1}: we observe that once we have $m$ consecutive integers $t,t-1,t-2,\ldots,t-m+1$ for which it is possible to $k$-color the edges of $K_{t-j}$, $j=0,\ldots,m-1$ without an mdm-path
 of order $m$, then we can $(k+1)$-color the graph $K_{q-j}$ for $j=0,\ldots,m-1$ without an mdm-path of order $m$, where $q=(m-1)t-\frac{(m-1)(m-2)}{2}$ and the process can be continued.
So whenever we have an improvement of the basic lower bound, we can carry over this new better bound. We shall use this to prove the lower bound for $M_k(3)$ obtained in Theorem \ref{t:2}.

\section{Paths of order $3$}

\subsection{A structural property}
We consider certain conditions imposed on the degrees of bipartite graphs, and then use the structural properties of these  bipartite graphs when such graphs exists, and the non-existence of such
graphs otherwise, to prove the upper bound in Theorem \ref{t:2}.

A bipartite graph with bipartition $V=A \cup B$ is said to be \emph{illusive} if:
\begin{itemize}
\item{$|A| > |B|$, $A$ has no isolated vertices, and for every vertex $v \in A$, $deg(v) \geq deg(u)$ for all vertices $u \in N(v)$, or}
\item{$|A|=|B|$, $A$ has no isolated vertices, and for every vertex $v \in A$, $deg(v) \geq deg(u)$ for all vertices $u \in N(v)$.  Furthermore, there exists $v \in A$ such that
$deg(v)>deg(u)$, for some $u \in N(v)$.}
\end{itemize}

\begin{lemma}\label{l:illusive}
Illusive graphs do not exist.
\end{lemma}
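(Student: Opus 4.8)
The plan is to rule out both cases by a single fractional edge-counting argument that pits the degree-domination condition on $A$ against the sizes of the two classes. The key observation is that the condition ``$deg(v) \geq deg(u)$ for every $v \in A$ and every neighbor $u$'' can be captured cleanly by weighting each edge according to its $A$-endpoint.

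Concretely, I would assign to every edge $e = \{v,u\}$ with $v \in A$ and $u \in B$ the weight $w(e) = 1/deg(v)$. Summing around the $A$-side first, the weights of the edges incident to a fixed $v \in A$ total $deg(v) \cdot (1/deg(v)) = 1$ (here I use that $A$ has no isolated vertices, so $deg(v) \geq 1$ and the weight is well defined), so the total weight over all edges equals exactly $|A|$. Summing instead around the $B$-side, the contribution at a fixed $u \in B$ is $\sum_{v \in N(u)} 1/deg(v)$; the domination condition gives $deg(v) \geq deg(u)$ for each such $v$, so each term is at most $1/deg(u)$, and the contribution at $u$ is at most $deg(u) \cdot (1/deg(u)) = 1$ (an isolated vertex of $B$ contributes $0$). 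Hence the total weight is at most $|B|$. Comparing the two evaluations yields $|A| \leq |B|$, which already contradicts the hypothesis $|A| > |B|$ of the first case.

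For the second case $|A| = |B|$, I would track exactly when the chain $|A| \leq |B|$ is tight. Equality forces, for every $u \in B$, both that $deg(u) \geq 1$ and that $\sum_{v \in N(u)} 1/deg(v) = 1$; since each summand is at most $1/deg(u)$ and there are $deg(u)$ of them, every summand must equal $1/deg(u)$, i.e.\ $deg(v) = deg(u)$ for every edge $\{v,u\}$. But an illusive graph with $|A| = |B|$ is required to contain some $v \in A$ and some neighbor $u$ with $deg(v) > deg(u)$ strictly, contradicting the forced edgewise equality. Thus neither case can occur, and illusive graphs do not exist.

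The only genuine design choice, and the step I expect to take some thought, is selecting the weighting that simultaneously linearizes the per-vertex count on the $A$-side (to read off $|A|$) while letting the inequality $deg(v) \geq deg(u)$ act termwise on the $B$-side (to read off the bound $|B|$). Once $w(e) = 1/deg(v)$ is in hand, both the inequality and its equality analysis are routine.
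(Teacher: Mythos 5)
Your proof is correct, and it takes a genuinely different and substantially shorter route than the paper's. The paper argues by taking a minimal counterexample and invoking Hall's theorem: either $A$ (or a suitable $q$-subset of it) matches into $B$, in which case summing degrees along the matching gives $|E(G)| > |E(G)|$ or forces edgewise degree equality, or else a violating set $Q$ with $|N(Q)|<|Q|$ induces a smaller illusive graph; the balanced case additionally requires deleting the matching edges and a case analysis on the resulting isolated vertices. Your weighting $w(\{v,u\})=1/\deg(v)$ collapses all of this into one double count: summing over $A$ gives exactly $|A|$, summing over $B$ gives at most $|B|$ via the domination hypothesis, so $|A|\le|B|$ kills the first case outright, and the tightness analysis (every $u\in B$ must contribute exactly $1$, forcing $\deg(v)=\deg(u)$ across every edge) kills the second. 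What your approach buys is the elimination of Hall's theorem, the minimal-counterexample induction, and the delicate bookkeeping of isolated vertices after edge deletion; what the paper's approach arguably buys is a purely combinatorial, integer-arithmetic argument with no fractional weights, and a structural viewpoint (violating sets induce smaller illusive graphs) that echoes the way Lemma 3.3 is later reduced to this one. Both are complete; yours is the cleaner proof of this particular statement.
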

\begin{proof}
We consider first the case $|A| > |B|$ and assume by contradiction that $G$ is a minimum counter-example, namely $G$ is an illusive graph with a minimum number of vertices, such that $|A|>|B|$.
Let $|A|=p$ and $|B|=q$, $p>q$. Let us order the vertices $v_1,v_2,\cdots, v_p \in A$ such that $deg(v_1) \geq \cdots \geq deg(v_p)$.  

Consider the set of vertices $S=\{v_1,\ldots, v_q\} \subset A$.  Now if $S$ has a matching to $B$ in $G$, then we can arrange the vertices $u_1,\ldots,u_q \in B$ such that $v_i$ is adjacent to $u_i$
for $i=1,\ldots, q$, and  clearly $deg(v_i) \geq deg(u_i)$ as they are neighbors in $G$ and no vertex in $S$ lost any neighbor.

Now as $deg(v_p) \geq 1$, we clearly have \[|E(G)|= \sum_{i=1}^{i=p} deg(v_i) > \sum_{i=1}^{i=q} deg(u_i) = |E(G)|,\] a contradiction.  Hence, there is no matching between $S$ and $B$.  Now by
Hall's Theorem \cite{dougwest}, there exists $Q \subset S$ such that $|N(Q)| < |Q|$.  Consider the subgraph $H$ of $G$ induced by $Q \cup N(Q)$.  Clearly $|V(H)| < |V(G)|$.  $H$ is an illusive graph
because any vertex in $Q$ has all its neighbors in $N(Q)$, and only vertices in $N(Q)$ can lose some of its neighbors which are not in $Q$. Also, since no vertex in $A$ is isolated, it follows that
no vertex in $Q$ is isolated as all the neighbors of the vertices in $Q$ are in $N(Q)$.
Hence, $H$ is an illusive graph with $|V(H)| < |V(G)|$, a contradiction to the minimality of $G$.

Now consider  the case $|A| = |B|$ and assume by contradiction that $G$ is a minimum counter-example, namely $G$ is an  illusive graph with a minimum number of vertices, such that $|A|=|B|=p$ and
furthermore from all such graphs with $|A|=|B|=p$,  let $G$ have the minimum number of edges. Let us order the vertices $\{v_1,\ldots,v_p\}$ in $A$ such that $deg(v_1) \geq \cdots \geq deg(v_p)$.

If there is a matching in $G$ from $A$ to $B$, then we can rearrange the vertices $\{u_1,\ldots,u_p\}$ of $B$ such that every vertex $v_i$ is adjacent to $u_i$ for $i=1, \ldots, p$, and clearly
$deg(v_i) \geq deg(u_i)$ as they are neighbors in $G$.

Let us delete the edges $(v_i,u_i)$ $i=1, \ldots, p$ to obtain $G^*$ such that $V(G^*)=V(G)$ but $|E(G^*)| < |E(G)|$.  Every vertex in $G^*$ has degree one less than that in $G$, so we still have
$deg(v_i) \geq deg(u_i)$ for every $v_i \in A$ and every $u_i \in N(v_i)$.

Now if some vertex in $A$ in $G^*$ has degree $0$, then its neighbor in the matching in $B$ must also have degree $0$. We consider the following two cases:
\begin{enumerate}
\item
In $G^*$ there are more vertices of degree $0$ in $B$ than in $A$.  Let us delete all the vertices of degree 0 from $A$ and $B$ to get $A^*$ and $B^*$, and $H=A^* \cup B^*$, with $|V(A^*)|>|V(B^*)|$. 
But then $H$ is illusive of  the type which we proved to be impossible in the first part of the proof.
\item
There are exactly the same number of vertices of degree 0 in $A$ and in $B$ (possibly no isolated vertices at all).
Let us delete all the vertices of degree 0 from $A$ and $B$ to get $A^*$ and $B^*$, and $H=A^* \cup B^*$ a subgraph of $G^*$, with
$|V(A^*)|=|V(B^*)|$.  Recall that there exists a vertex $v \in A$ such that there is a vertex $u \in N(v)$ with $deg(v)>deg(u)$.  Now if in the matching in $G$, $v$ is matched with $u$, then in $G$
\[|E(G)|= \sum_{i=1}^{i=p} deg(v_i) > \sum_{i=1}^{i=p} deg(u_i) = |E(G)|,\] which is not possible. Hence $v$ is not matched to $u$.  But then $deg(v) \geq 2$ and $v$  is still connected to $u$ in
$H$, and hence in $H$, $deg(v)>deg(u)$ and $u$ and $v$ are adjacent, which implies that $H$ is a smaller illusive graph, a contradiction.
\end{enumerate}

Finally let us assume that there is no matching between $A$ and $B$ in $G$. Again, by Hall's Theorem, there exists $Q \subset A$ such that $|N(Q)| < |Q|$.  Consider the subgraph $H$ of $G$ induced
by $Q \cup N(Q)$.  Clearly $|V(H)| < |V(G)|$.  $H$ is an illusive graph because any vertex in $Q$ has all its neighbors in $N(Q)$, and only vertices in $N(Q)$ can lose some of its neighbors which are
not in $Q$.  Also, since no vertex in $A$ is isolated, it follows that no vertex in $Q$ is isolated as all the neighbors of the vertices in $Q$ are in $N(Q)$.
Hence, $H$ is an illusive graph of the type proved impossible in the first part of this proof.  Hence illusive graphs do not exist.
\end{proof}

\noindent
An immediate consequence  of Lemma \ref{l:illusive} is:
\begin{corollary}
Let $G$ be a connected  bipartite graph with bipartition $V = A \cup B$  such that  $|A| \geq |B|$  and for every vertex $v  \in A$,  $deg(v) \geq deg(u)$ for every $u\in  N(v)$.   Then $|A| = |B|$
and $G$ is regular.
\end{corollary}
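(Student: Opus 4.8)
The plan is to read off the conclusion directly from the two bullets in the definition of an illusive graph, using Lemma \ref{l:illusive} as a black box. First I would dispose of the degenerate case: since $G$ is connected and (being a genuine bipartite graph with both sides nonempty) has at least one edge, every vertex has degree at least $1$. In particular $A$ has no isolated vertices, so the ``$A$ has no isolated vertices'' hypothesis appearing in both bullets of the definition is automatically met. I would record this observation once at the start so it can be invoked in both cases below.

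Next I would establish $|A|=|B|$ by contradiction. Suppose $|A|>|B|$. Then the hypotheses of the corollary, together with the no-isolated-vertices observation, say exactly that $G$ satisfies the first bullet of the definition of an illusive graph. By Lemma \ref{l:illusive} no illusive graph exists, which is the desired contradiction. Hence $|A|=|B|$.

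With $|A|=|B|$ in hand, I would show $G$ is regular, again by contradiction. The key step is the dichotomy forced by the hypothesis $deg(v)\ge deg(u)$ for every $v\in A$ and $u\in N(v)$: either this inequality is strict for some adjacent pair, or it is an equality for every edge. In the first case, $G$ would satisfy the second bullet of the definition (equal sides, no isolated vertices in $A$, the weak degree inequality everywhere, and a strict inequality somewhere), making $G$ illusive and contradicting Lemma \ref{l:illusive}. So I would be left with the case $deg(v)=deg(u)$ for every edge $vu$. Here I would invoke connectivity: for any two vertices $x,y$, choose a path $x=w_0,w_1,\ldots,w_\ell=y$; each consecutive pair $w_i w_{i+1}$ is an edge, so $deg(w_i)=deg(w_{i+1})$, and telescoping gives $deg(x)=deg(y)$. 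As $x,y$ are arbitrary, $G$ is regular, completing the proof.

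I do not expect a serious obstacle, since the corollary is essentially a restatement of Lemma \ref{l:illusive} in contrapositive form; the only point requiring a small argument is the regularity conclusion, where one must convert the edgewise equality of degrees into global regularity via a connectivity (path-telescoping) argument rather than merely quoting ``no illusive graph.'' The one thing to be careful about is matching hypotheses precisely to the two bullets—in particular checking that the no-isolated-vertices condition really follows from connectivity in each subcase—so that Lemma \ref{l:illusive} applies cleanly.
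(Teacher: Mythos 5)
Your proof is correct and is exactly the argument the paper intends: the corollary is stated there as an ``immediate consequence'' of Lemma \ref{l:illusive} with no written proof, and the intended reading is precisely your case split on the two bullets of the illusive definition (ruling out $|A|>|B|$ and then ruling out any strict inequality when $|A|=|B|$), followed by the path-telescoping step to upgrade edgewise degree equality to regularity. No issues.
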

\begin{lemma}\label{l:illusive2}
Suppose $G$ is a bipartite graph with $V = A \cup B$  such that $|A| = k$ and $ |B| = k +1$, and such that for every vertex $v \in A$, $deg(v) \geq  1$ and $deg(v) > deg( u)$ for every
$u \in  N(v)$. Then $G = K_{k,k+1}$.
\end{lemma}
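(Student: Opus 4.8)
The plan is to deduce the statement from the nonexistence of illusive graphs (Lemma \ref{l:illusive}) by adjoining a single \emph{universal} vertex to the side $A$ in order to balance the two sides. Concretely, I would form a new bipartite graph $G'$ on parts $A' = A \cup \{v_0\}$ and $B$, where $v_0$ is a brand new vertex joined to every vertex of $B$. Since $|A'| = k+1 = |B|$, the graph $G'$ falls into the equal-sides regime of the illusive definition, and the goal becomes to verify that $G'$ meets the structural hypotheses of that definition.

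The main work, and the crux of the argument, is to check the domination hypothesis $\deg_{G'}(v) \ge \deg_{G'}(u)$ for every $v \in A'$ and every $u \in N_{G'}(v)$; here the numerology is delicate. Adjoining $v_0$ raises the degree of each $u \in B$ by exactly $1$, while leaving the degrees of the original vertices of $A$ unchanged. Thus for an original $v \in A$ and a neighbor $u$, the strict hypothesis $\deg_G(v) > \deg_G(u)$, that is $\deg_G(v) \ge \deg_G(u)+1$, becomes precisely $\deg_{G'}(v) = \deg_G(v) \ge \deg_G(u)+1 = \deg_{G'}(u)$ --- the ``$+1$'' exactly absorbs the strictness. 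For the new vertex one has $\deg_{G'}(v_0) = k+1 \ge \deg_G(u)+1 = \deg_{G'}(u)$, because $\deg_G(u) \le |A| = k$. Finally $A'$ has no isolated vertex, since every $v \in A$ has $\deg_G(v) \ge 1$ by hypothesis and $v_0$ has degree $k+1$.

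With these checks in place, $G'$ satisfies the first three defining conditions of an illusive graph with equal sides, namely equal parts, no isolated vertex in the distinguished side, and non-strict domination. By Lemma \ref{l:illusive} no illusive graph exists, so $G'$ must fail the one remaining clause: it can contain \emph{no} strict inequality, i.e.\ $\deg_{G'}(v) = \deg_{G'}(u)$ for every edge $vu$ with $v \in A'$. Applying this to the universal vertex $v_0$, which is adjacent to all of $B$, forces $\deg_{G'}(u) = \deg_{G'}(v_0) = k+1$, and hence $\deg_G(u) = k$, for every $u \in B$. This says exactly that every vertex of $B$ is adjacent to all of $A$, so $G = K_{k,k+1}$, as required.

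I expect the only genuine obstacle to be the careful bookkeeping of the previous two paragraphs: confirming that the ``$+1$'' shift converts strict domination on $A$ into non-strict domination on $A'$ with no leftover slack, and then correctly invoking the contrapositive of the illusive definition (that a graph satisfying the structural conditions but failing to be illusive must have every inequality an equality). I would also remark that this approach never needs the separate assumption that $B$ has no isolated vertex; that fact falls out a posteriori from the conclusion $G = K_{k,k+1}$.
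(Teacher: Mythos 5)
Your proof is correct, and it takes a genuinely different route from the paper's. The paper orders $B$ by degree, sets $B^* = B \setminus \{u_{k+1}\}$, and splits into two cases: if $A$ has a perfect matching into $B^*$, an edge-counting argument using the strict inequalities forces $deg(u_{k+1}) = k$ and hence $G = K_{k,k+1}$; if not, Hall's Theorem produces a deficient set $Q$, and $Q \cup (N(Q) \cup \{u_{k+1}\})$ induces an illusive graph, contradicting Lemma \ref{l:illusive}. Your universal-vertex augmentation avoids the case split entirely: the key observation that adjoining $v_0$ shifts every $B$-degree up by exactly $1$, converting the strict domination $deg_G(v) \ge deg_G(u)+1$ into non-strict domination in $G'$ with no slack, lets you invoke the balanced case of the illusive definition directly, and the forced failure of the ``furthermore'' clause at $v_0$ immediately gives $deg_G(u) = k$ for all $u \in B$. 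Your argument is shorter and arguably cleaner (no matching argument, no edge count), at the cost of leaning on the balanced case of Lemma \ref{l:illusive} (including its ``furthermore'' clause) rather than mostly on elementary counting; the paper's version, by contrast, only needs the illusive machinery in its no-matching branch. Both are valid; yours is a nice reduction.
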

\begin{proof}
Let us order the vertices $ u_1,\ldots,u_{k+1}$ of $B$ in non-increasing order  so that $deg(u_1)
\geq \cdots \geq deg(u_{k+1})$.  Let  $B^* = B \backslash  u_{k+1}$.
Suppose first that $A$  has a perfect matching to $B^*$.  Then the vertices $ v_1,\ldots,v_k$  of $A$ can be ordered such that $v_i$ is adjacent to $ u_i$ and $deg(v_i) \geq deg( u_i)  +1$.
Counting edges in $G$  we get \[|E(G)| = \sum_{i=1}^k deg(v_i)    =   \sum_{i=1}^{k+1} deg( u_i ) \leq \left(\sum_{i=1}^{k} deg( v_i)  - 1 \right) + deg(u_{k+1})\]\[  =  \left(\sum_{i=1}^{k}
deg( v_i) \right) - k +  deg(u_{k+1})  = |E(G)| - k + deg( u_{k+1}).\]
Hence $deg(u_{k+1})$  must be equal $k$ (since $|A| = k$),  and since $u_{k+1}$ has minimum  degree in $B$ it forces all other vertices in $B$ to have degree $k$, and  hence $G = K_{k,k+1}$.
 
Hence suppose $A$ has no perfect matching to $B^*$.  Then by Hall's Theorem there is a subset $Q$ in $A$ such that $|N(Q)| < |Q| \leq |A| =k$ .
Consider the bipartite graph $H$  induced by the parts $Q$  and $N(Q) \cup  \{ u_{k+1}\}$.
Since the only possible neighbor of the vertices of $Q$ not in $B^*$ is $u_{k+1}$, it follows that  $Q$  and
$N(Q) \cup  \{ u_{k+1}\}$ induce a bipartite graph $H$ where the degrees of all vertices
in $Q$ are strictly larger than the degrees of their neighbors in $N(Q) \cup  \{ u_{k+1}\}$.
Since $|N(Q) \cup  \{ u_(k+1)\}| \le |Q|$,  $H$ is an illusive graph which doesn't exists.
\end{proof}
 
\subsection{Proof of Theorem \ref{t:2}}

We start with the following proposition that yields the upper bound $M_3(k) \le 2^k$. It will be useful to establish the small values $M_2(3)$ and $M_3(3)$.
\begin{proposition}\label{p:1}
$M_k(3) \leq 2^k$.
\end{proposition}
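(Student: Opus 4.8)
The plan is to prove the contrapositive: if a $k$-edge-coloring of $K_n$ has no monochromatic degree-monotone path of order $3$ (call such a coloring \emph{good}), then $n \le 2^k - 1$, which immediately gives $M_k(3) \le 2^k$. The first step is to record the local structure that the absence of an mdm-path of order $3$ forces on each color class $G_j$. I claim that every vertex $v$ with $deg_j(v) \ge 1$ is either a \emph{$j$-maximum} ($deg_j(v) > deg_j(u)$ for all neighbours $u$) or a \emph{$j$-minimum} ($deg_j(v) < deg_j(u)$ for all neighbours $u$), unless $v$ lies in an isolated $G_j$-edge (both endpoints of degree $1$). Indeed, if $v$ had an equal-degree neighbour together with any second neighbour, or had neighbours on both sides of its own degree, one of them together with $v$ would complete a monotone $3$-path. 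Consequently, writing $M_j$ and $L_j$ for the sets of $j$-maxima and $j$-minima, $G_j$ is bipartite between $M_j$ and $L_j$, augmented by some isolated edges and isolated vertices, and every non-degenerate edge joins $M_j$ to $L_j$.

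The second step builds an injection $V \to \{0,1\}^k$. For a vertex $v$ and colour $j$, I set the $j$-th coordinate to $1$ if $v \in M_j$ and to $0$ if $v \in L_j$, and I fix the degenerate coordinates by a tie-breaking rule: a vertex isolated in $G_j$ gets $0$, and for each isolated $G_j$-edge $uv$ I use a fixed global vertex order to give one endpoint $1$ and the other $0$. The key property is that for every edge $uv$, if $j$ is its colour then $u$ and $v$ receive \emph{different} $j$-th coordinates: a non-degenerate edge has one endpoint in $M_j$ and the other in $L_j$, while an isolated $G_j$-edge is separated by the tie-break. Hence distinct vertices get distinct vectors, and $n \le 2^k$.

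It remains to exclude $n = 2^k$, and here I would invoke the corollary to Lemma~\ref{l:illusive}. In any component $C$ of $G_j$ the maxima side dominates its neighbours, so if $C$ contained at least as many maxima as minima the corollary would force $C$ to be regular; but a regular component of degree at least $2$ has adjacent equal-degree vertices (a monotone $3$-path) and a regular component of degree $1$ is merely an isolated edge. Thus every component is an isolated vertex, an isolated edge, or satisfies $|M_j \cap C| < |L_j \cap C|$, whence $|M_j| \le |L_j|$, with strict inequality whenever $M_j \ne \emptyset$. Now if $n = 2^k$ the injection is a bijection onto $\{0,1\}^k$, so each coordinate equals $1$ exactly $2^{k-1}$ times. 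Since the two endpoints of each isolated $G_j$-edge are split between the two values, counting the $1$'s and $0$'s in coordinate $j$ gives $|M_j| = |L_j| + |I_j|$, where $I_j$ is the set of vertices isolated in $G_j$. Together with $|M_j| \le |L_j|$ this forces $I_j = \emptyset$ and $|M_j| = |L_j|$, and then the strictness above forces $M_j = L_j = \emptyset$; that is, every $G_j$ is a perfect matching. But then each vertex has total degree $k$, whereas in $K_n$ it is $n - 1 = 2^k - 1 > k$ for $k \ge 2$, a contradiction. Hence $n \le 2^k - 1$.

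The main obstacle I anticipate is the bookkeeping of the degenerate vertices — those isolated or lying in isolated edges of some colour — which are precisely the vertices that are neither a strict maximum nor a strict minimum. These must be handled both when making the labelling injective and in the final counting at $n = 2^k$; the tie-breaking orientation of isolated edges and their resulting even split between the two coordinate values are exactly what keep the count tight enough to eliminate the boundary case.
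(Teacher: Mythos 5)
Your proposal is correct, and it reaches the conclusion by a genuinely different route from the paper. The paper leans on the full strength of Lemma~\ref{l:1}, including the quoted characterization of extremal colorings (at $n=2^k$ every proper $2$-coloring of each $G_j$ must be equitable), to force each component of each $G_j$ to be balanced bipartite; it then applies Lemma~\ref{l:illusive} inside each component to reduce it to $K_2$, and finishes with the same edge-counting contradiction you reach. You instead (i) extract the strict local max/min structure of each colour class directly from the absence of monotone $3$-paths, (ii) obtain $n\le 2^k$ by an explicit injection of $V$ into $\{0,1\}^k$ recording membership in $M_j$ versus $L_j$ (with a tie-break on isolated edges) --- a hands-on substitute for the Zykov product-colouring bound that bypasses Lemma~\ref{l:1} and Gallai--Roy entirely --- and (iii) eliminate the boundary case $n=2^k$ by playing the coordinate count $|M_j|=|L_j|+|I_j|$ against the inequality $|M_j|\le|L_j|$ (strict when $M_j\neq\emptyset$) supplied by the corollary to Lemma~\ref{l:illusive}. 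Both arguments converge on ``every $G_j$ is a perfect matching'' and the final count $k=2^k-1$. What your route buys is self-containedness (the extremal characterization in Lemma~\ref{l:1} is cited by the paper without proof) and a uniform treatment of all $n\ge 2^k$; what the paper's route buys is brevity, since the structural heavy lifting is outsourced to the quoted lemma. Your bookkeeping of the degenerate vertices (isolated vertices and isolated edges) is the delicate point, and you have handled it correctly in both the injectivity and the final count.
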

\begin{proof}
We already know that $M_k(3) \le 2^k+1$ from Theorem \ref{t:1}, so to establish the proposition it suffices to consider $k$-edge colorings of $K_{2^k}$.
Suppose, for contradiction, that we can color the edges of $K_{2^k}$ using $k$ colors such that there is no mdm-path of order $3$.
Let $G_j$ be the spanning graph whose edges are colored $j$ for $j=1,\ldots, k$. So by Lemma \ref{l:1} and the paragraph following it, our coloring is
an extremal example and thus for all $j=1,\ldots, k$ we have $\chi(G_j)=2$ and in every bipartition of $G_j$ both parts have the same order $2^{k-1}$.

Hence, each component of $G_j$ is a bipartite graph with bipartition $A,B$ where $|A|=|B|$.
Consider any such component which is not a $K_2$. Hence $|A|=|B| \ge 2$.
The degrees of the vertices in any path connecting a vertex from $A$ with another vertex from $A$ form
a sequence of integers with odd length and with no monotone subsequence of order $3$. As any two vertices of $A$ can be connected via a path, we have that either all vertices of $A$
have degree larger than all the degrees of their neighbors is $B$ or vice versa. Assume the former. Then this component is illusive, and by Lemma \ref{l:illusive}, this is impossible.

Hence all components of $G_j$ are $K_2$ and therefore all $G_j$ for $ j =1,\ldots,k$ are perfect matchings.
So we cover $K_{2^k}$ by $k$ matchings  each having precisely $2^{k-1}$ edges.
Hence
\[
k2^{k-1} = \frac{2^k(2^k - 1)}{2}
\]
and thus $k = 2^k -1$ which is impossible for $k \geq 2$, a contradiction.
\end{proof}

\begin{figure}[t!]
\centering
\includegraphics{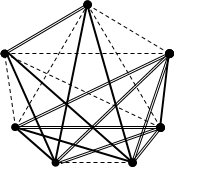}
\caption{$K_7$  decomposed into three copies of $K_{2,3} \cup K_2$} \label{diag1}
\end{figure}

\begin{corollary}\label{col:1}
$M_2(3)=4$ and $M_3(3)=8$.
\end{corollary}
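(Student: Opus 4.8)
The plan is to obtain both values by pairing the upper bound from Proposition~\ref{p:1} with a matching lower bound, where the lower bound for $M_3(3)$ requires an explicit construction since Theorem~\ref{t:1} is too weak there. First I would record the upper bounds: Proposition~\ref{p:1} gives $M_k(3)\le 2^k$, so in particular $M_2(3)\le 4$ and $M_3(3)\le 8$; by the definition of $M_k(3)$ this means that for every $n\ge 4$ (respectively $n\ge 8$) every $2$-coloring (respectively $3$-coloring) of $K_n$ contains an mdm-path of order $3$.

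For $M_2(3)$ the matching lower bound is immediate from Theorem~\ref{t:1}, which yields $M_k(3)\ge 2^{k-1}+2$ and hence $M_2(3)\ge 4$, giving $M_2(3)=4$. Concretely, a $2$-coloring of $K_3$ in which one color is a single edge and the other is the remaining path of two edges witnesses that $n=3$ admits no mdm-path of order $3$: the two-edge color has degree sequence $1,2,1$ along its only path of order $3$, and the one-edge color has no path of order $3$ at all.

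For $M_3(3)$ the same bound only gives $M_3(3)\ge 6$, so I would instead exhibit a $3$-coloring of $K_7$ with no mdm-path of order $3$; since this shows that $M=7$ fails the defining property, it forces $M_3(3)\ge 8$ and hence $M_3(3)=8$. The construction is to decompose the $21$ edges of $K_7$ into three isomorphic color classes, each a copy of $K_{2,3}\cup K_2$ (see Figure~\ref{diag1}). The verification is the heart of the argument: in such a color class the two vertices of the size-$2$ side of $K_{2,3}$ have degree $3$, the three vertices of the size-$3$ side have degree $2$, and the two $K_2$ vertices have degree $1$; any path of order $3$ must lie inside the connected component $K_{2,3}$ (the $K_2$ component being too small), and there the degree sequence along a path alternates as $3,2,3$ or $2,3,2$, which is never monotone. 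Thus no color class contains a degree-monotone path of order $3$.

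The main obstacle I anticipate is producing the explicit edge-decomposition of $K_7$ into three copies of $K_{2,3}\cup K_2$ — a small but genuine design problem — and confirming that the three color graphs simultaneously realize the claimed degrees on all seven vertices. Once this decomposition is exhibited, the absence of a degree-monotone path of order $3$ in each class reduces to the routine degree check above, and combining $M_3(3)\ge 8$ with the upper bound $M_3(3)\le 8$ completes the proof.
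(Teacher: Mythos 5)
Your proposal is correct and follows essentially the same route as the paper: the upper bounds come from Proposition \ref{p:1}, the value $M_2(3)=4$ follows because $K_3$ trivially admits a $2$-coloring with no mdm-path of order $3$, and $M_3(3)=8$ follows from the decomposition of $K_7$ into three copies of $K_{2,3}\cup K_2$ (the paper's Figure \ref{diag1}), whose degree check you carry out correctly. The explicit decomposition you worry about is exactly what the paper supplies in that figure, so there is no gap.
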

\begin{proof}
By Proposition \ref{p:1} we have $M_2(3) \leq 4$.  Trivially, if we color $K_3$ using two colors we do not have an mdm-path of order $3$.  Hence $M_2(3)=4$.
Similarly, we know that $M_3(3) \leq 2^3=8$.  We can color the edges of $K_7$ in such a way that $G_i = K_{2,3} \cup K_2$ for $i=1,2,3$, as shown in Figure \ref{diag1}.
It is easy to see that $mp(K_{2,3} \cup K_2)=2$, and hence $M_3(3)= 8$. 
\end{proof}

As the sequence starts with $M_2(3)=2^2$ and $M_3(3)=2^3$ and since $M_k(3) \le 2^k$ one may wonder whether $M_k(3)=2^k$. The following lemma shows that this is not the case
already for $k \ge 4$. Somewhat surprisingly, the proof requires some effort.
\begin{lemma}\label{l:mk3}
For $k \geq 4$, $M_k(3) \leq 2^k-1$.
\end{lemma}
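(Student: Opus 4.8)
The plan is to reduce to a single complete graph and then force a rigid structure on any counterexample. Proposition~\ref{p:1} already gives $M_k(3)\le 2^k$, so for every $n\ge 2^k$ there is an mdm-path of order $3$; hence it suffices to prove that $K_{2^k-1}$ admits no $k$-colouring avoiding an mdm-path of order $3$. Suppose for contradiction that such a colouring exists, with monochromatic graphs $G_1,\dots,G_k$. As in the proof of Proposition~\ref{p:1}, the absence of an mdm-path of order $3$ means each non-isolated vertex of $G_j$ that does not lie in a $K_2$ component is a strict local extremum, so every non-trivial (non-$K_2$) component of $G_j$ is bipartite with a \emph{high} side $A$ (strict local maxima) and a \emph{low} side $B$ (strict local minima).

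First I would pin down the global shape using $\chi$. Since $mp(G_j)\le 2$, the inequality $mp(G_j)\ge\chi(G_j)$ noted in the introduction gives $\chi(G_j)\le 2$; fix a proper $2$-colouring $\phi_j\colon V\to\{1,2\}$ of each $G_j$. The product map $\Phi=(\phi_1,\dots,\phi_k)\colon V\to\{1,2\}^k$ is injective, because two vertices with the same image are joined by an edge of some colour $j$, on which the proper colouring $\phi_j$ would take equal values, a contradiction. As $|V|=2^k-1$, the image of $\Phi$ omits exactly one tuple $\tau$, so for every $j$ the classes $\phi_j^{-1}(1)$ and $\phi_j^{-1}(2)$ have sizes $2^{k-1}-1$ and $2^{k-1}$ in some order.

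Next I would upgrade this to full rigidity. Flipping the two sides of any single component of $G_j$ yields another proper $2$-colouring, so $\Phi$ remains injective and the class sizes remain in $\{2^{k-1}-1,2^{k-1}\}$; since such a flip changes $|\phi_j^{-1}(1)|$ by the difference of the two side sizes, every component of $G_j$ has its two sides differing by at most $1$. On the other hand, Lemma~\ref{l:illusive} rules out $|A|\ge|B|$ for any non-trivial component, so $|B|=|A|+1$ exactly, and Lemma~\ref{l:illusive2} then identifies that component as $K_{|A|,|A|+1}$. Hence each $G_j$ is a disjoint union of complete bipartite graphs $K_{a,a+1}$, single edges $K_2$, and isolated vertices, with parts respecting the near-balanced bipartition $\phi_j$.

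With this structure I would finish by a global count over all colours, using that $K_{2^k-1}$ is $(2^k-2)$-regular, so $\sum_{j}deg_j(v)=2^k-2$ for every $v$ and $\sum_j |E(G_j)|=\frac{(2^k-1)(2^k-2)}{2}$, to show these rigid pieces cannot edge-partition $K_{2^k-1}$ once $k\ge4$. I expect this to be the main obstacle, and the argument must genuinely use $k\ge4$: for $k=3$ the whole analysis is consistent and is realised by the decomposition of $K_7$ into three copies of $K_{2,3}\cup K_2$ in Corollary~\ref{col:1}, so any correct argument has to break exactly at $k=4$. Concretely, I would track, colour by colour, the degrees forced by membership in the smaller class (size $2^{k-1}-1$) versus the larger class (size $2^{k-1}$) of $\phi_j$ together with the location of the omitted tuple $\tau$, and play these local degree constraints against the fixed total degree $2^k-2$ to reach an arithmetic contradiction for $k\ge4$.
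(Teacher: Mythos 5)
Your structural reduction is sound and in places cleaner than the paper's. The paper also reduces to $K_{2^k-1}$, forces each $G_j$ to be bipartite with every bipartition of sizes $2^{k-1},2^{k-1}-1$ (it cites the extremal statement in Lemma~\ref{l:1} where you use the injective product map $\Phi$ and component flips -- a nice alternative), and then invokes Lemmas~\ref{l:illusive} and~\ref{l:illusive2} exactly as you do to conclude that each $G_j$ is $K_{a,a-1}$ plus a perfect matching on the rest. (Two small points you should still nail down: that there is \emph{exactly one} unbalanced component per colour, and that isolated vertices cannot occur -- both follow from your flipping argument, but you don't say so.)

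The genuine gap is the endgame, which you explicitly defer and which is the real content of the lemma. A global edge count cannot work on its own: writing $|E(G_j)|=(a_j-1)^2+2^{k-1}-1$, the requirement $\sum_j|E(G_j)|=\binom{2^k-1}{2}$ for $k=4$ becomes $\sum_{j=1}^4(a_j-1)^2=77$, which has the solution $(a_1,a_2,a_3,a_4)=(7,7,3,2)$ (and others, e.g.\ $(7,6,5,1)$ and $(7,5,5,4)$), so no arithmetic contradiction appears at this level; refining by vertex degrees forces a case analysis over all such solution vectors that does not visibly close for general $k$. The paper's actual finish is a different idea: take the colour $k$ with the most edges, so its essential component $K_{a,a-1}$ has $a-1\ge\lceil\sqrt{(2^{k-1}-1)(2^k-k-1)/k}\,\rceil$; observe that the essential component of every other colour, being connected and of that colour, cannot meet both sides $A$ and $B$ of this $K_{a,a-1}$; by pigeonhole some side, say $B$, meets at most $t\le\lfloor(k-1)/2\rfloor$ essential components, so the clique on $B$ decomposes into $t$ bipartite graphs and $k-1-t$ matchings; a product-colouring pigeonhole over the $t$ bipartitions yields $B'\subseteq B$ with $|B'|\ge(a-1)/2^t$ all of whose internal edges must lie in those $k-1-t$ matchings, which is impossible once $|B'|-1>k-1-t$ -- an inequality verified asymptotically for $k\ge10$ and by hand for $k=4,\dots,9$. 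Without this (or some substitute) your argument does not reach a contradiction, so as written the proof is incomplete.
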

\begin{proof}
We already know that $M_k(3) \le 2^k$ so to establish the lemma it suffices to consider $k$-edge colorings of $K_{2^k-1}$.
Let $G_j$ be the spanning graph whose edges are colored $j$ for $j=1,\ldots, k$. If for some $j$, $G_j$ is not bipartite, then there is an
mdm-path of order $3$, so we may assume that each $G_j$ is bipartite.

We claim that in any bipartition of $G_j$, one side has $2^{k-1}$ vertices (and thus the other side has $2^{k-1}-1$ vertices).
Indeed, otherwise, one side would contain more than $2^{k-1}$ vertices, and induces an edge coloring with $k-1$ colors (all colors except $j$), so by Lemma \ref{l:1} and the paragraph following it,
there is an mdm-path of order $3$ in one of these colors.

So, each component of $G_j$ is a bipartite graph where the two sides have equal size, except precisely one component where the two sides differ in size by $1$.
(If there were more that one such component we could arrange a bipartition of $G_j$ into two sides whose sizes differ by more than $1$, and we have shown that this is impossible).
Now, by Lemmas \ref{l:illusive} and \ref{l:illusive2}, if there is no mdm-path of order $3$, then each balanced component must be a single edge, and the non-balanced component must be
$K_{b,b-1}$ for some integer $b$.

Hence $G_j$ is of the form $K_{b,b-1} \cup (2^{k-1}-b)K_2$. For each $G_j$, we call the $K_{b,b-1}$ component the {\em essential 
component} and the remaining matching on $2^{k-1}-b$ edges is the {\em non-essential part}.

Now, the number of edges of $K_{2^k-1}$ is $(2^k-1)(2^k-2)/2$, so the average number of edges in a colored graph is  $(2^k-1)(2^k-2)/2k=(2^k-1)(2^{k-1}-1)/k$. 
So, if we consider the largest colored graph, it is of the form $K_{a,a-1} \cup (2^{k-1}-a)K_2$ where we must have $a(a-1)+2^{k-1}-a \ge (2^k-1)(2^{k-1}-1)/k$.
Solving for $a$ we obtain that we must have
$$
a-1 \ge \left\lceil \sqrt{(2^{k-1}-1)(2^k-k-1)/k} \right\rceil\;.
$$
For example, if $k=5$ we must have $a \ge 10$.

Without loss of generality, let $G_k$ is the largest colored graph.
So, let us consider the essential component of $G_k$.  It is a complete bipartite graph with sides $A,B$ with $|A|=a$ and $|B|=a-1$.
Now, each color $i$ for $i=1,\ldots,k-1$ has the property that its essential component cannot intersect both $A$ and $B$. So either $A$ or $B$ have the property that they intersect
$t \le \lfloor (k-1)/2 \rfloor$ essential parts of the colors $1,\ldots,k-1$. We will consider the case that $B$ intersects $t \le \lfloor (k-1)/2 \rfloor$ essential parts (the proof for $A$ is
similar and  in fact easier since $A$ is larger than $B$).
Without loss of generality, the essential parts of $G_1,\ldots,G_t$ intersect $B$ and the essential parts of $G_{t+1},\ldots,G_{k-1}$ do not intersect $B$.

So, the complete graph induced on $B$ (namely, $K_{a-1}$) has the property that it is composed of $t$ spanning bipartite graphs $H_1,\ldots,H_t$, where $H_i$ is the subgraph of $G_i$ induced on $B$,
and $k-t$ matchings (these matchings are from the non-essential parts of the colors $t+1,\ldots,k-1$ whose essential parts do not intersect $B$).
Since $H_i$ is bipartite it has a bi-partition $L_i \cup R_i$. We associate with each vertex $v$ of $B$ a binary vector of length $t$
where the $i$'th coordinate is $1$ if $v \in R_i$ and $0$ if $v \in L_i$. Altogether there are $2^t$ possible vectors, distributed over the $a-1$ vertices of  $B$.

So, there is a subset $B' \subset B$ of size at least $(a-1)/2^t$  such that any two vertices of $B$ are associated with the same vector.  Now, consider $u,v \in B'$. The edge connecting them cannot
be colored with any of  the colors $1,\ldots,t$, since they received the same vector.  Hence, the edge connecting them must be from one of the non-essential parts of the colors $t+1,\ldots,k-1$. But
since the non-essential parts of these $k-1-t$ colors are a union of $k-1-t$ matchings, in order to get a contradiction it suffices to prove that $|B'|-1 > k-1-t$ or, if $|B'|$ is odd, it suffices
to prove that $|B'|-1 \ge k-1-t$.

So we are left with the issue of verifying that $\lceil (a-1)/2^t \rceil > k-t$ or, if $\lceil (a-1)/2^t \rceil$ is odd, it suffices to show that $\lceil (a-1)/2^t \rceil \ge k-t$.
Using the fact that $a-1 \ge \lceil \sqrt{(2^{k-1}-1)(2^k-k-1)/k} \rceil$ and that $t \le \lfloor (k-1)/2 \rfloor$ this amounts to verifying the following inequality:
$$
\left \lceil \frac{\lceil \sqrt{(2^{k-1}-1)(2^k-k-1)/k} \rceil}{2^{\lfloor (k-1)/2 \rfloor}} \right\rceil > k- \lfloor \frac{k-1}{2} \rfloor
$$
or, if the left hand side is odd, it suffices to prove a weak inequality.

Notice that for $k \ge 10$ the strong inequality is true even if we remove the ceilings in the l.h.s. and remove the floor in the denominator of the l.h.s.
For $k=4,\ldots,9$ we verify explicitly:

For $k=4,5$ we have the inequality $3 \ge 3$ which is true (here we use the fact the the l.h.s. is odd so the weak inequality suffices).
For $k=6$ we have the inequality $\lceil 18/4 \rceil > 6-2$.
For $k=7$ we have the inequality $\lceil 33/8 \rceil > 7-3$.
For $k=8$ we have the inequality $\lceil 63/8 \rceil > 8-3$.
For $k=9$ we have the inequality $\lceil  120/16 \rceil > 9-4$.
Hence the lemma holds for all $k \ge 4$.
\end{proof}

\noindent
We now turn to prove the lower bound in Theorem \ref{t:2}.
\begin{lemma}\label{l:2-lower}
$M_k(3) \ge \frac{3}{4}2^k+2$ for $k \geq 3$.
\end{lemma}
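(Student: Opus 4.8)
The plan is to prove this \emph{lower} bound by an explicit construction together with the doubling mechanism of Remark 1. Since exhibiting a $k$-coloring of $K_n$ with no mdm-path of order $3$ (call such a coloring \emph{good}) immediately forces $M_k(3)\ge n+1$, it suffices to produce, for every $k\ge 3$, a good $k$-coloring of $K_{n_k}$ where $n_k:=\tfrac34 2^k+1=3\cdot 2^{k-2}+1$. I will carry a stronger inductive hypothesis: that all three consecutive orders $n_k,\,n_k-1,\,n_k-2$ admit good $k$-colorings. This is precisely what Remark 1 requires to propagate in the case $m=3$, since doubling consumes three consecutive base values at each step.

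First I record the building blocks. A graph has $mp\le 2$ exactly when every non-isolated vertex is a strict local degree-extremum, so each nontrivial component is bipartite with a ``high'' side and a ``low'' side that are strictly separated in degree along every edge. The safe components are therefore single edges, unbalanced complete bipartite graphs $K_{a,b}$ with $a\ne b$ (as forced by Lemmas \ref{l:illusive} and \ref{l:illusive2}), and, crucially, the \emph{flexible} graph formed by two high-degree vertices sharing some common neighbors while each carries some private pendant neighbors. A disjoint union of safe components is again safe, because a degree-monotone path lives inside a single component.

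\textbf{Base case $k=3$.} I exhibit good $3$-colorings of $K_7,K_6,K_5$. For $K_7$ I reuse the decomposition into three copies of $K_{2,3}\cup K_2$ from Corollary \ref{col:1} and Figure \ref{diag1}. For $K_5$ I let one color be $K_{2,3}$ on parts $\{1,2\},\{3,4,5\}$, then split the remaining edge-plus-triangle $\{12,34,35,45\}$ into a matching $\{12,45\}$ and a path $\{34,35\}$. The delicate case is $K_6$: a large complete-bipartite color class such as $K_{2,4}$ strands a monochromatic $K_4$ in the other two colors, and $K_4$ has no good $2$-coloring, so only complete-bipartite pieces cannot work. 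Instead I use two flexible components plus a matching, $G_1=\{13,14,15,23,24,26\}$, $G_2=\{16,25,35,36,45,46\}$, $G_3=\{12,34,56\}$; one checks that each of $G_1,G_2$ has two vertices of degree $3$, two of degree $2$, and two of degree $1$, with every edge strictly degree-separated, while $G_3$ is a perfect matching, so all three have $mp=2$.

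\textbf{Doubling step.} Given good $k$-colorings of $K_a$ and $K_b$ with $a\ne b$, I place them on disjoint vertex sets, retain colors $1,\dots,k$, and color all cross-edges with a new color $k+1$; that color class is the unbalanced $K_{a,b}$ (hence $mp=2$), and every old color class becomes a disjoint union of two safe graphs. Applying this to the pairs $(n_k,n_k-1)$, $(n_k,n_k-2)$, $(n_k-1,n_k-2)$ yields good $(k+1)$-colorings of orders $2n_k-1,\,2n_k-2,\,2n_k-3$, which equal $n_{k+1},\,n_{k+1}-1,\,n_{k+1}-2$ since $2n_k-1=3\cdot 2^{k-1}+1=n_{k+1}$. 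This completes the induction, producing in particular a good $k$-coloring of $K_{n_k}$ and hence $M_k(3)\ge n_k+1=\tfrac34 2^k+2$. The main obstacle is precisely the $K_6$ base construction: it cannot be built from the complete-bipartite safe pieces alone and genuinely requires the flexible two-center components, after which the doubling is routine.
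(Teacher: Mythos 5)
Your proof is correct and follows essentially the same route as the paper: explicit good $3$-colorings of $K_7$, $K_6$, $K_5$ as the base case, followed by the doubling mechanism of Remark 1 (disjoint union of two good colorings of distinct orders, with all cross-edges in a new color, applied to the three pairs of consecutive orders). Your explicit $K_5$ and $K_6$ colorings check out; they simply make concrete what the paper delegates to its figures.
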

\begin{proof}
We have already shown in Figure \ref{diag1} that there is an edge coloring of $K_7$ with $3$ colors without an mdm-path of order $3$.
Figure \ref{diag2} gives constructions of edge colorings of $K_6$ and $K_5$ with $3$ colors without an mdm-path of order $3$ (recall: we cannot just use
the coloring for $K_7$ to deduce this for $K_6$ and $K_5$ as the degree-monotone property is not hereditary).
Hence by Remark 1, using $k=3$, $m=3$ and $t=7$, we have that we can $4$-color $K_{13},K_{12},K_{11}$ with no mdm-path of order $3$,
we can $5$-color $K_{25},K_{24},K_{23}$ with no mdm-path of order $3$, and the process continues so that we can $k$-color
$K_{(3/4)2^k+1}, K_{(3/4)2^k}, K_{(3/4)2^k-1}$ with no mdm-path of order $3$, so in particular $M_k(3) \ge  \frac{3}{4}2^k+2$ for $k \ge 3$.
\end{proof}

\noindent
Theorem \ref{t:2} now follows from Corollary \ref{col:1}, Lemma \ref{l:mk3} and Lemma \ref{l:2-lower}. \qed

\begin{figure}[h]
\centering
\includegraphics{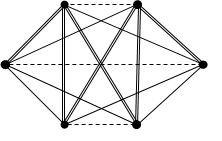} \hspace{2cm} \includegraphics{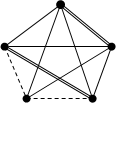}
\caption{$3$-edge-colorings of $K_6$ and $K_5$ with no mdm-path of order $3$.} \label{diag2}
\end{figure}

\section{Proof of Theorem \ref{t:3}}
We prove that $M(3,m)=2(m-1)$. Namely, for $n \ge 2(m-1)$, in a $2$-coloring of the edges of $K_n$ there is either a degree-monotone path of order $3$ in color $2$ or a degree-monotone path of
ordered $m$ in color $1$, and that this is tight.

Starting with tightness, observe that it is obtained by coloring $G  = K_{m-1,m-2}$ with color $2$ and coloring its complement with color $1$.

So suppose $n \ge 2(m-1)$ and $K_n$ is edge-colored using two colors $1$ and $2$.
If $n > 2(m-1)$ the result follows from the upper bound in Theorem \ref{t:1}. So we assume $n=2(m-1)$.
Let $G_j$ be the spanning graph of $K_{2(m-1)}$ induced by the edges with color $j$, $j=1,2$. We will show that either $G_1$ has a has degree monotone path of order $m$ or
$G_2$ has a degree monotone path of order $3$.

We know by Lemma \ref{l:1} and the paragraph following it that $\chi(G_1)=m-1$ and $\chi(G_2)=2$, and further, $G_1$ can be split into two cliques $A$ and $B$ such that $|A|=|B|=m-1$,
with possible edges between $A$ and $B$.
Now if we order the vertices in both $A$ and $B$ in non-decreasing order of their degrees in $G_1$,
say $A=\{v_1,v_2,\ldots,v_{m-1}\}$ and $B=\{u_1,u_2,\ldots,u_{m-1}\}$, then in $G_1$, $v_i$ is not adjacent to
$u_i$ for otherwise there will be a degree monotone path of order $m$, either $v_1,\ldots,v_i,u_i,\ldots,u_{m-1}$ or $u_1,\ldots,u_i,v_i,\ldots,v_{m-1}$.

Hence in $G_2$, $v_i$ and $u_i$ are adjacent and $deg(v_i) \geq deg(v_{i+1})$, as well as $deg(u_i) \geq deg(u_{i+1})$ in $G_2$.
Assume without loss of generality, that $deg(v_1)$ is maximal in $G_2$. If $deg(v_1) = 1$, then $G_2$ is  a matching --- but then clearly there is a monotone path of order $m$ in $G_1$. 
Thus, $deg(v_1)  \geq 2$.  We may assume $deg(v_1) > deg(u_1)$ for otherwise $deg(v_1) = deg(u_1) \geq 2$ will force a monotone path of order $3$.
 
Now all the neighbors of $v_1$  are in $B$ and hence have degrees less than $deg(v_1)$. Consider $N(N(v_1))$ which are all in $A$ and each vertex in $N(N(v_1))$ must have a degree strictly
greater than the degrees of its neighbor in $N(N(N(v_1)))$ in $B$ (otherwise there is monotone degree path of order $3$).   
Continuing this way, then either $G_2$ is connected and  illusive or contains an illusive component with balanced sides $A^*$ and $B^*$, $|A^*| = |B^*|$,  guaranteed by the matching
$(v_i u_i)$ for $i = 1,\ldots,(m-1)$. But illusive graphs are impossible, hence $G_2$ contains a degree-monotone path of order $3$.
\qed

\section{Some open problems}

As mentioned in the introduction, having a degree-monotone path of a certain order is not a hereditary property.
Hence the following problem seems of interest.
Let ${\cal N}_k(m)$ be the set of all positive integers such that $n \in {\cal N}_k(m)$ if and only if in every $k$-coloring of the edges of $K_n$ there is a monochromatic mdm-path of order $m$.
\begin{problem}
Is it true that for all $k$ and $m$, ${\cal N}_k(m)$ has no gaps.
\end{problem}
Recall that the proof in Section 3 gives that ${\cal N}_2(3)$, ${\cal N}_3(3)$, ${\cal N}_4(3)$ have no gaps. For ${\cal  N}_4(3)$ this follows since Theorem \ref{t:2} gives $14 \le M_4(3) \le 15$
and since the construction in Lemma \ref{l:2-lower} together with some small case analysis can be used to prove that for all $n \le 13$, there are $4$-edge colorings of $K_n$ with no mdm-path
of order $3$. We can also show (see below) that ${\cal N}_2(4)$ has no gaps.

Theorem \ref{t:2} asserts that
$$
\frac{3}{4} \le \liminf_{k \rightarrow \infty} \frac{M_k(3)}{2^k} \le \limsup_{k \rightarrow \infty} \frac{M_k(3)}{2^k} \le 1\;.
$$
\begin{problem}
Determine if $\lim_{k \rightarrow \infty} \frac{M_k(3)}{2^k}$ exists and determine it.
\end{problem}

The diagonal case with two colors, namely $M_2(m)$, may be the most accessible. By Theorem \ref{t:1} we know that $M_2(m) \le (m-1)^2+1$.
\begin{conjecture}
For every constant $C$, if $m$ is sufficiently large, then $M_2(m) \le (m-1)^2-C$.
\end{conjecture}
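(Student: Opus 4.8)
The plan is to work just below the trivial threshold and show that no mdm-path-free coloring can survive even a bounded amount below $(m-1)^2$. Since Theorem~\ref{t:1} already settles every $n > (m-1)^2$, and since the statement concerns all sufficiently large $m$ with a fixed $C$, it suffices to treat the band $(m-1)^2 - C \le n \le (m-1)^2$ and to assume, for contradiction, that some $2$-coloring of $K_n$, with $n = (m-1)^2 - c$ and $0 \le c \le C$, has no mdm-path of order $m$. Write $G_1$ and $G_2 = \overline{G_1}$ for the two color classes; then $mp(G_1), mp(G_2) \le m-1$. For each color $i$ I would fix the canonical acyclic degree-orientation (low to high, ties broken by a fixed vertex order) and the induced partition of $V$ into independent ``levels'' $L^i_1, L^i_2, \dots$, where the level of a vertex is the order of a longest monotone $G_i$-path ending at it; there are exactly $mp(G_i) \le m-1$ nonempty levels, degrees are monotone across levels, and each level, being independent in $G_i$, is a clique in $G_{3-i}$.

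The structural engine is that each level, read off in order of its $G_{3-i}$-degrees, is itself a monotone path in $G_{3-i}$; hence every level has order at most $m-1$, for a level of order $m$ would already be an mdm-path. This re-proves $n \le (m-1)^2$, and since $n = (m-1)^2 - c$ with $c < m-1$, it forces each $G_i$ to have exactly $m-1$ levels, of which at least $m-1-c$ are \emph{full} (of order exactly $m-1$). For a full $G_1$-level $L$ --- a $K_{m-1}$ in $G_2$ --- the monotone path on $L$ ends at its maximum-$G_2$-degree vertex $w_L$, and to block its extension to order $m$ every external $G_2$-neighbor of $w_L$ must have strictly smaller $G_2$-degree. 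This is exactly the local-extremum behaviour underlying the illusive graphs of Lemma~\ref{l:illusive} and its refinement Lemma~\ref{l:illusive2}, and I would invoke those lemmas to rule out the bipartite degree patterns it produces, in the spirit of the proof of Theorem~\ref{t:2}.

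From the strict-degree condition one gets an antisymmetry: if $w_L$ and $w_{L'}$ were $G_2$-adjacent, each would have to dominate the other in $G_2$-degree, which is impossible; so the set $W$ of level-maxima is pairwise $G_2$-nonadjacent, i.e.\ a clique in $G_1$, with $|W| \ge m-1-c$. A $G_1$-clique of order $m$ is itself an mdm-path, so $|W| \le m-1$; when $c=0$ this pins $|W| = m-1$, and extending the monotone path on $W$ by one further $G_1$-vertex (again blocked only by a local-extremum condition, hence again impossible by Lemma~\ref{l:illusive}) yields the contradiction giving the reduction $M_2(m) \le (m-1)^2$. I would then run the symmetric argument with the colors exchanged to produce a $G_2$-clique $W'$, and combine $|W \cap W'| \le 1$ with a count over the $(m-1)\times(m-1)$ array of cells $L^1_p \cap L^2_q$ (each of order at most $1$) to force many empty cells, since $n = \sum_{p,q}|L^1_p \cap L^2_q|$.

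The main obstacle is precisely this last amplification: the argument as sketched cleanly kills the exact extremal case $c=0$, but to obtain an arbitrary constant $C$ one must show that maintaining all the simultaneous local-extremum constraints across both colors forces $\Omega(C)$ of the cells to be empty, i.e.\ forces $n \le (m-1)^2 - C$. This is a stability statement about near-extremal colorings, and the delicate point is that strictly below $n = (m-1)^2$ one loses the rigidity supplied by Lemma~\ref{l:1}, so the exact grid need not appear and the local-max conditions must be propagated and counted by hand rather than read off from an equitable partition. Making this count genuinely scale with $C$, rather than merely recovering the $-1$ improvement obtained at $c=0$, is where I expect the real difficulty to lie.
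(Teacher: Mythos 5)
The statement you are addressing is a \emph{conjecture} from the paper's open-problems section; the paper contains no proof of it (the authors only record the data points $M_2(3)=4=(3-1)^2$ and $M_2(4)=7=(4-1)^2-2$, which show the conjecture is genuinely about large $m$ and cannot hold with $m=3$). So the only question is whether your sketch actually closes it, and it does not — as you candidly admit in your last paragraph. The entire content of the conjecture is that the deficit below $(m-1)^2$ can be taken to be an \emph{arbitrary} constant $C$; your argument, even if every step were completed, targets only the single case $c=0$, i.e.\ the one-point improvement $M_2(m)\le (m-1)^2$. Nothing in the sketch propagates the rigidity you exploit at $n=(m-1)^2$ (all $2(m-1)$ levels full, the grid of cells $L^1_p\cap L^2_q$ each a single vertex) down to $n=(m-1)^2-c$ for general $c\le C$: once even one level is deficient, the set $W$ of level-maxima may have fewer than $m-1$ elements and the cell-counting identity $n=\sum_{p,q}|L^1_p\cap L^2_q|$ no longer forces anything. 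You correctly identify this as the "real difficulty," but identifying the difficulty is not resolving it; the proposal is a research plan, not a proof.

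Moreover, even the $c=0$ sub-case is not actually established. The level decomposition (levels independent in $G_i$, hence cliques in $G_{3-i}$, hence of order at most $m-1$) only re-derives $n\le(m-1)^2$, i.e.\ the bound of Theorem~\ref{t:1}. To kill $n=(m-1)^2$ you build the $G_1$-clique $W$ of the $m-1$ level-maxima — this part is fine, and the antisymmetry argument showing $W$ is $G_2$-independent is correct — but the final step, "extending the monotone path on $W$ by one further $G_1$-vertex \ldots again impossible by Lemma~\ref{l:illusive}," does not follow. Lemma~\ref{l:illusive} concerns bipartite graphs in which \emph{every} vertex of the (weakly) larger side dominates \emph{all} of its neighbours; blocking the extension of the monotone path on $W$ constrains only the two endpoints of that path (the $G_1$-degree maximum and minimum of $W$), which is nowhere near an illusive configuration, and there is no evident contradiction in a coloring where just those two vertices are local extrema in $G_1$. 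So the sketch proves neither the conjecture nor its $C=0$ shadow; what it does contain of value is the observation that the two Gallai–Roy level partitions tile $V$ into an $(m-1)\times(m-1)$ array of cells of size at most one, which is a reasonable starting point for a stability analysis but is, at present, only that.
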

Recall that we know that $M_2(3)=4$ and we have also verified (using a computer) that $M_2(4) = 7$.
For the latter we needed to verify that all $2$-edge colorings of $K_7,K_8,K_9$ have an mdm-path of order $4$
(recall that Theorem \ref{t:1} guarantees that $7 \le M_2(4) \le 10$).

\bibliographystyle{plain}

\bibliography{dmp}

\end{document}